\newcommand{\ds}{\displaystyle}
\newcommand{\defn}{\stackrel{\triangle}{=}}
\newcommand{\tth}{{}^{\text{th}}}
\newcommand{\amax}{A_{\max}}
\newcommand{\smax}{S_{\max}}
\newcommand{\dto}{\downarrow}
\newcommand{\constant}{\beta}
\newcommand{\constantbar}{\overline{\constant}}
\newcommand{\bR}{\mathbb{R}}
\newcommand{\cS}{\mathcal{S}}
\newcommand{\cM}{\mathcal{M}}
\newcommand{\cC}{\mathcal{C}}
\newcommand{\cK}{\mathcal{K}}
\newcommand{\cF}{\mathcal{F}}
\newcommand{\cH}{\mathcal{H}}
\newcommand{\cT}{\mathcal{T}}
\newcommand{\cL}{\mathcal{L}}
\newcommand{\vq}{\boldsymbol{q}}
\newcommand{\va}{\boldsymbol{a}}
\newcommand{\vs}{\boldsymbol{s}}
\newcommand{\vu}{\boldsymbol{u}}
\newcommand{\vlambda}{\boldsymbol{\lambda}}
\newcommand{\vc}{\boldsymbol{c}}
\newcommand{\vnu}{\boldsymbol{\nu}}
\newcommand{\vmu}{\boldsymbol{\mu}}
\newcommand{\vx}{\boldsymbol{x}}
\newcommand{\vy}{\boldsymbol{y}}
\newcommand{\vw}{\boldsymbol{w}}
\newcommand{\ve}{\boldsymbol{e}}
\newcommand{\vone}{\boldsymbol{1}}
\newcommand{\vzero}{\boldsymbol{0}}
\newcommand{\qbar}{\overline{q}}
\newcommand{\vabar}{\overline{\va}}
\newcommand{\vsbar}{\overline{\vs}}
\newcommand{\vubar}{\overline{\vu}}
\newcommand{\ubar}{\overline{u}}
\newcommand{\Mbar}{\overline{M}}
\newcommand{\Bbar}{\overline{B}}
\newcommand{\vqpark}{\vq_{\parallel \cK}}
\newcommand{\vqperpk}{\vq_{\perp \cK}}
\newcommand{\vqbar}{\overline{\vq}}
\newcommand{\vqbarpark}{\vqbar_{\parallel \cK}}
\newcommand{\vqbarperpk}{\vqbar_{\perp \cK}}
\newcommand{\vqparh}{\vq_{\parallel \cH}}
\newcommand{\vqperph}{\vq_{\perp \cH}}
\newcommand{\vqbarparh}{\vqbar_{\parallel \cH}}
\newcommand{\vqbarperph}{\vqbar_{\perp \cH}}
\newcommand{\vabarparh}{\vabar_{\parallel \cH}}
\newcommand{\vsbarparh}{\vsbar_{\parallel \cH}}
\newcommand{\vubarparh}{\vubar_{\parallel \cH}}
\renewcommand{\pm}{{(m)}}
\newcommand{\pl}{{(\ell)}}
\newcommand{\pml}{{(m,\ell)}}
\newcommand{\peps}{{(\epsilon)}}
\newcommand{\vcl}{\vc^\pl}
\newcommand{\bl}{b^\pl}
\newcommand{\bml}{b^\pml}
\newcommand{\piml}{\pi^\pml}
\newcommand{\vcltilde}{\widetilde{\vc}^\pl}
\newcommand{\vqbartildel}{\widetilde{\vqbar}^\pl}
\newcommand{\vubartildel}{\widetilde{\vubar}^\pl}
\newcommand{\vqbarparhtilde}{\vqbartildel_{\parallel\cH}}
\newcommand{\vqbarperphtilde}{\vqbartildel_{\perp\cH}}
\newcommand{\ctildel}{\tilde{c}^\pl}
\newcommand{\ind}[1]{\mathds{1}_{\left\{#1 \right\}}}
\newcommand{\Prob}[1]{\mathbb{P}\left[#1\right]}
\newcommand{\E}[1]{\mathbb{E}\left[#1 \right]}
\newcommand{\Em}[1]{\mathbb{E}_m\left[#1 \right]}
\newcommand{\Var}[1]{\text{Var}\left[#1 \right]}
\newcommand{\pushright}[1]{\ifmeasuring@#1\else\omit\hfill$\displaystyle#1$\fi\ignorespaces}
\def\ba#1\ea{\begin{align*}#1\end{align*}}
\def\ban#1\ean{\begin{align}#1\end{align}}
\theoremstyle{plain}
\newtheorem{theorem}{Theorem}
\newtheorem{lemma}{Lemma}
\newtheorem{proposition}{Proposition}
\title{\LARGE \bf
Logarithmic  Heavy Traffic Error Bounds in Generalized Switch and Load Balancing Systems
}
\newtheorem{claim}{Claim}
\newtheorem{corollary}{Corollary}
\author{ Daniela Hurtado-Lange, Sushil Mahavir Varma, Siva Theja Maguluri
\thanks{Daniela Hurtardo Lange and Sushil Mahavir Varma are students at Industrial and Systems Engineering, Georgia Institute of Technology
        {\tt\small d.hurtado@gatech.edu, sushil@gatech.edu}}%
\thanks{Siva Theja Maguluri is a faculty at Industrial and Systems Engineering, Georgia Institute of Technology
        {\tt\small siva.theja@gatech.edu}}%
}
\begin{document}

\maketitle
\thispagestyle{empty}
\pagestyle{empty}

\begin{abstract}
Motivated by application in wireless networks, cloud computing, data centers etc, 
Stochastic Processing Networks have been studied in the literature under various asymptotic regimes. In the heavy-traffic regime, the steady state mean queue length is proved to be $O(\frac{1}{\epsilon})$ where $\epsilon$ is the heavy-traffic parameter, that goes to zero in the limit. The focus of this paper is on obtaining queue length bounds on prelimit systems, thus establishing the rate of convergence to the heavy traffic. In particular, we study the generalized switch model operating under the MaxWeight algorithm, and we show that the mean queue length of the prelimit system is only $O\left(\log \left(\frac{1}{\epsilon}\right)\right)$ away from its heavy-traffic limit. We do this even when the so called complete resource pooling (CRP) condition is not satisfied. When the CRP condition is satisfied, in addition, we show that the MaxWeight algorithm is within $O\left(\log \left(\frac{1}{\epsilon}\right)\right)$ of the optimal. Finally, we obtain similar results in load balancing systems operating under the join the shortest queue routing algorithm. 


\end{abstract}


\section{Introduction}
Resource allocation and load balancing problems arise frequently in wide variety of applications such as wireless networks, data centers, ride hailing systems such as Uber and Lyft, routing and congestion control of traffic, manufacturing, telecommunications etc. It is typical to model these systems as Stochastic Processing Networks (SPNs) \cite{williams_survey_SPN}. Analyzing the delay and queue length behaviour of these systems in general is challenging and so, they are studied under various asymptotic regimes. Heavy traffic is a popular regime,  
where one studies the behavior of the system as the traffic intensity is increased to the maximum capacity. 
Even though it provides insights on the performance of the system asymptotically, a natural question is: `How well does the heavy traffic limiting behavior approximate the prelimit system?' Such a question can be answered by obtaining error bounds on the heavy-traffic approximation, as in \cite{atilla, MagSri_SSY16_Switch, Hurtado_gen-switch_temp}. 
In this paper, we obtain tight error bounds that grow logarithmically, as opposed to error bounds that grow polynomially in \cite{atilla, MagSri_SSY16_Switch,Hurtado_gen-switch_temp}.

Most of the work on heavy-traffic analysis is in systems that satisfy the so called Complete Resource Pooling (CRP) condition, which is satisfied when the system has a single bottleneck. In the heavy-traffic limit, the system then exhibits a State Space Collapse (SSC) onto a line, and behaves like a single-server queue. This makes the analysis tractable, and there are several different approaches to study such systems. Heavy traffic asymptotic performance of these systems was characterized using a diffusion limit approach in \cite{stolyar2004maxweight}, and using transform methods in \cite{Hurtado_transform_method_temp}. Lyapunov drift based arguments were used in \cite{atilla} to also obtain convergence rates, and it was shown that the steady-state mean of a linear combination of the queue lengths is $\frac{K_1}{\epsilon}+o\left(\frac{1}{\epsilon}\right)$ for some appropriately defined constant $K_1$, where $\epsilon$ is a parameter denoting the distance to the boundary of the capacity region.

In this paper we study a generalized switch model, which was first introduced in \cite{stolyar2004maxweight} to study several SPNs with control on the service process, such as input queued switches, ad hoc wireless networks, cloud computing, data centers etc. We consider the MaxWeight algorithm, and, using a tighter variant of the the drift argument in \cite{atilla,MagSri_SSY16_Switch,Hurtado_gen-switch_temp}, 
we show that MaxWeight is within $K_2\log \left(\frac{1}{\epsilon}\right)$ of the optimal policy (see Corollary \ref{corollary:gen-switch-CRP}). This is the first contribution of this paper.

We study a generalized switch without assuming that the CRP condition is satisfied, and we improve the bounds presented in \cite{Hurtado_gen-switch_temp} without adding any assumption. Specifically, we compute an upper bound of the form $\frac{K_1}{\epsilon}+K_2\log \left(\frac{1}{\epsilon}\right)$ for linear combinations of the queue lengths (see Theorem \ref{gs.thm:bounds}).
This establishes a logarithmically growing error bound with respect to the heavy traffic limit queue length behavior, $\frac{K_1}{\epsilon}$. This is the second contribution of this paper.

In addition to systems where the service is controlled, we look at load balancing systems, where the arrivals can be controlled. We consider the popular Join the Shortest Queue (JSQ) algorithm, which is known to satisfy the CRP condition, and exhibits one dimensional SSC \cite{atilla}. We show 
that the mean sum of the queue lengths is $\frac{K'_1}{\epsilon}+K'_2\log \left(\frac{1}{\epsilon}\right)$  (see Theorem \ref{thm:jsq}) which, in conjunction with the ULB showed in \cite{atilla}, establishes that JSQ is with in $K'_2\log \left(\frac{1}{\epsilon}\right)$ of the optimal. A similar result can be obtained for other algorithms such as power-of-$d$, which we don't present here due to lack of space. This is the third contribution of this paper.

A general resource allocation problem was studied in \cite{Mey_08}, under the CRP condition, and it was shown that $h$-MaxWeight algorithm, which is a variation of MaxWeight algorithm achieves logarithmic optimality. The function $h$ has to be found by solving a fluid control problem. 
In contrast, in this paper we show logarithmic optimality for vanilla MaxWeight algorithm in a generalized switch under CRP. 
More over, the results in this paper are also applicable to systems where CRP condition is not satisfied. 
 
\subsection{Notation} 

We denote the set of integers from 1 to $n$ by $[n]$. We denote the set of real numbers by $\mathbb{R}$, non negative real numbers by $\mathbb{R}_+$, integers by $\mathbb{Z}$ and non negative integers by $\mathbb{Z}_+$. All the vectors in the paper are boldfaced. 
The sets of $n$ dimensional vectors with real components and non negative real components are denoted by $\mathbb{R}$ and $\mathbb{R}_+$, respectively. We denote dot product between two vectors by $\langle \vx,\vy \rangle$ and Euclidean norm of a vector by $\|\vx\|$. We denote the $i\tth$ canonical vector by $\ve^{(i)}$, the vector of ones by $\vone$, and the vector of zeroes by $\vzero$. 
We denote transpose a matrix by $A^T$, and the Hadamard product between two matrices by $A \circ B$. The expectation and variance of a random variable $X$ are given by $\E{X}$ and $\Var{X}$, respectively, and the co-variance between two random variables $X$ and $Y$ by $Cov(X,Y)$. The probability of an event $E$ is denoted by $\Prob{E}$, and the indicator function of an event $E$ by $\ind{E}$. For a set $S$ we use $Int(S)$ and $Bo(S)$ to denote its relative interior and its boundary, respectively.

\section{Logarithmic  Error Bounds in Generalized Switch}
\subsection{Model}\label{sec:model}

In this section, we present the generalized switch model in detail. Consider $n$ queues 
operating in discrete time, with time indexed by $k \in \mathbb{Z}_+$. 
\begin{techreport}
A pictorial example is presented in Figure \ref{fig:model}.
\begin{figure}
    \centering
    \begin{tikzpicture}[scale=0.3]
    \draw[black, thick] (0,0) circle (1);
    \draw[black,thick] (-4,1) -- (-1,1) -- (-1,-1) -- (-4,-1);
    \draw[black, thick] (0,-2.5) circle (1);
    \draw[black,thick] (-4,-1.5) -- (-1,-1.5) -- (-1,-3.5) -- (-4,-3.5);
    \filldraw[color=black,fill=black,thick] (-2,-4.5) circle (0.15);
     \filldraw[color=black,fill=black,thick] (-2,-5.2) circle (0.15);
      \filldraw[color=black,fill=black,thick] (-2,-5.9) circle (0.15);
      \draw[black, thick] (0,-7.9) circle (1);
    \draw[black,thick] (-4,-6.9) -- (-1,-6.9) -- (-1,-8.9) -- (-4,-8.9);
    \draw[black,thick,->] (-7.5,0) -- (-4.5,0) node[anchor=south east] {\scriptsize $a_1(k)$};
    \draw[black,thick,->] (-7.5,-2.5) -- (-4.5,-2.5) node[anchor=south east] {\scriptsize $a_2(k)$};
    \draw[black,thick,->] (-7.5,-7.9) -- (-4.5,-7.9) node[anchor=south east] {\scriptsize $a_n(k)$};
    \draw[black,thick] (4,-3.95) -- (6.5,-1.95) -- (9,-3.95) -- (6.5,-5.95) -- cycle;
    \node at (6.5,-3.95){\scriptsize Scheduler};
    \draw[black,thick] (1,0) -- (4,-3.95);
    \draw[black,thick] (1,-2.5) -- (4,-3.95);
      \draw[black,thick] (1,-7.9) -- (4,-3.95);
      \draw[black,thick,->] (9,-3.95) -- (11,-3.95);
    \end{tikzpicture}
    \caption{Generalized switch model}
    \label{fig:model}
\end{figure}
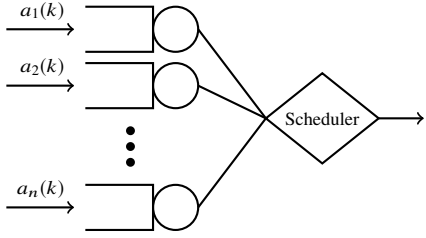
\end{techreport}
\subsubsection{Arrival Process}
We define a sequence of i.i.d. random variables $\{a_i(k): k \in \mathbb{Z}_+ \}$ for all $i \in [n]$, where $a_i(k)$ denotes the arrival to the $i\tth$ queue at time $k$. Denote the mean arrival rate vector as $\vlambda\defn \E{\va(1)}$ and the co-variance matrix of the random vector $\va(1)$ by $\Sigma_a$. Assume $a_i(1) \leq A_{\max}$ with probability 1 for all $i \in [n]$, where $\amax$ is a finite constant.
\subsubsection{Service Process}
Let $s_i(k)$ be the potential service that can be offered by server $i$ in time slot $k$. If there are not enough jobs to serve in the queue, there is unused service in that time slot, and we denote it by $u_i(k)$. Then, the actual number of served jobs in queue $i$ at time $k$ is $s_i(k)-u_i(k)$. We allow interference among the servers, which enforces them to satisfy a set of feasibility constraints in each time slot. The scheduler is allowed to pick any service rate vector which satisfies these constraints in each time slot. Additionally, the environment of the servers can affect the interference constraints, and we capture this by a sequence of i.i.d random variables $\{M(k): k \in \mathbb{Z}_+\}$, where $M(k)$ is the `channel state' at time slot $k$. We assume the state space of channel state is finite and denote it by $\cM$. In addition, let the pmf of $M(1)$ be $\psi_m\defn\Prob{M(1)=m}$ for all $m \in \cM$. Finally, denote the set of feasible service rates in channel state $m$ by $\cS^\pm$, and assume that $\cS^\pm$ contains the projection on the coordinate axes of its elements.
 We also assume the cardinality of $\cS^\pm$ for all $m \in \cM$ is finite. Thus, there exists a finite constant $\smax$ such that $s_i(1) \leq \smax$ with probability 1 for all $i\in[n]$.

\subsubsection{Queueing Process}
The following steps are followed in each time slot (in this order):
\begin{cdc}
(i) Observe the channel state; (ii) A scheduling problem is solved to determine which queues are served and the service rates according to the channel state; (iii) Arrivals occur; (iv) Jobs are processed according to the selected schedule.
\end{cdc}
\begin{techreport}
\begin{itemize}
    \item Observe the channel state.
    \item A scheduling problem is solved to determine which queues are served and the service rates according to the channel state.
    \item Arrivals occur in the system.
    \item Jobs are processed according to the selected schedule.
\end{itemize}
\end{techreport}
Then, the queue dynamics follows the following recursion. 
\begin{align}\label{eq:evolution}
    q_i(k+1)=q_i(k)\hspace{-2pt}+\hspace{-2pt}a_i(k)\hspace{-2pt}-\hspace{-2pt}s_i(k)\hspace{-2pt}+\hspace{-2pt}u_i(k) \ \hspace{-2pt}  \forall k \in \mathbb{Z}_+  \forall i \in [n]
\end{align}
If the unused service is positive, then the queue length at the start of the next time slot should be zero and vice versa. Thus, we have
\begin{align}
    q_i(k+1)u_i(k)=0 \quad \forall k \in \mathbb{Z}_+  \forall i \in [n]. \label{eq: qu}
\end{align}
The scheduling problem is solved using MaxWeight scheduling algorithm, which select the schedule with the maximum total weighted queue length. Mathematically, 
\begin{align}\label{gs.eq.MW}
	\vs(k)\in\arg\max_{\vx\in \cS^\pm}\langle \vq(k),\vx\rangle,
\end{align}
and ties are broken randomly. Observe that, unless there are ties, the potential service vector is deterministic after observing the channel state and the queue length vector.

\subsubsection{Capacity Region}
It is known that the capacity region of this system is $	\cC= \sum_{m\in \cM} \psi_m\,ConvexHull\left(\cS^\pm \right)$. Thus, it is a coordinate convex polytope \cite{atilla} and we write it as the intersection of finitely many half spaces, i.e., we write 
\begin{align}\label{gs.eq.cap.reg.pol}
	\cC= \left\{\vx\in\bR^n_+:\langle \vcl,\vx \rangle\leq\bl\;,\,\ell=1,\ldots,L  \right\}.
\end{align}
Without loss of generality, we assume $\vcl \geq \vzero$, $\|\vcl\|=1$ and $\bl>0$ for all $\ell \in [L]$. We also denote the $\ell\tth$ facet as $\cF^\pl \defn\left\{\vx\in\cC:\langle\vcl,\vx\rangle=\bl \right\}$. In addition, we denote the maximum $\vc^\pl$ weighted service rate by $\bml$. Mathematically, we have
\begin{align}\label{gs.eq.cl.weighted.def}
	\bml=\max_{\vx\in\cS^\pm}\langle \vcl,\vx\rangle \quad \forall \ell \in [L].
\end{align}
To capture the randomness in the service process due to the channel state, we define a sequence of i.i.d random variables (independent of queue lengths and arrival process) $\{B_\ell(k): k \in \mathbb{Z}_+\}$ with pmf given by $\Prob{B_\ell(1)=\bml}=\psi_m$. Let the correlation matrix of the vector $\left\{B_\ell(1)\right\}_{\ell \in [L]}$ be $\Sigma_B$.

\subsubsection{Heavy Traffic and State Space Collapse} Fix a vector $\vnu$ in the boundary of $\cC$ and let $\vlambda^\peps\defn (1-\epsilon)\vnu$. We analyze a sequence of generalized switches parametrized by $\epsilon$ and 
\begin{cdc}
we add a superscript $\peps$ to the variables to emphasize their dependence on $\epsilon$.
\end{cdc}
\begin{techreport}
denote the queue length, arrival process, service process and unused service for the $\epsilon\tth$ system by $\{\vq^{(\epsilon)}(k): k \in \mathbb{Z}_+\}$, $\{\va^{(\epsilon)}(k): k \in \mathbb{Z}_+\}$, $\{\vs^{(\epsilon)}(k): k \in \mathbb{Z}_+\}$ and $\{\vu^{(\epsilon)}(k): k \in \mathbb{Z}_+\}$ respectively.
\end{techreport}
The parametrization is such that $\E{\va^\peps(1)}=\vlambda^\peps$. Then, the heavy traffic regime is observed as $\epsilon\dto 0$.

Finally, we denote all the steady state vectors with a line on top of the variable. In particular, $\vqbar^\peps$ is the steady state queue length vector such that $\vq(k)$ converges in distribution to $\vqbar^\peps$ as $k \rightarrow \infty$ (which is well defined because the queue lengths process is positive recurrent for all $\epsilon\in(0,1)$). In addition, let $\vabar^{(\epsilon)}$, $\Mbar$, $\Bbar_\ell$  be the steady state random variable/vector with the same distribution as $\va^\peps(1)$, $M(1)$, $B_\ell(1)$ respectively. 
\begin{techreport}
We have $\E{\vabar^\peps}=\vlambda^\peps$ and denote the co-variance matrix of $\vabar^\peps$ by $\Sigma_a^\peps$. Also denote the steady state offered service by $\vsbar^\peps$ 
and the steady state unused service by $\vubar^\peps$.  
\end{techreport} 
Finally, let $\left(\vqbar^\peps\right)^+\defn \vqbar^\peps+\vabar^\peps-\vsbar^\peps+\vubar^\peps$ be the vector of queue lengths one time slot after $\vqbar^\peps$.

Define the cone $\cK$ spanned by the normal to the facets $\cF^\pl$ such that $\vnu \in \cF^\pl$. Let $P\defn \{\ell\in[L]: \vnu\in\cF^\pl\}$. It was proved in \cite{Hurtado_gen-switch_temp} that, under the parametrization described above, the state space collapses into $\cK$.   
So, we have
\begin{align}
& \cK=\left\{\vx\in\bR^n_+:\;\vx=\sum_{\ell\in P}\xi_\ell \vcl\;, \;\xi_\ell\geq 0 \;\forall \ell\in P \right\}.\label{gs.eq.coneK}
\end{align}
In addition, define $\cH$ as the affine hull of $\cK$. We also define $\tilde{P} \subset P$ as the maximal set of indices in $P$ such that $\left\{\vc^\pl : \ell \in \tilde{P}\right\}$ is a set of linearly independent vectors. Let $C\defn [\vc^\pl]_{\ell \in \tilde{P}}$ and observe $\cH$ is the column space of $C$.

\subsection{Logarithmic Error Bounds}
In this section, we present the main result of this paper. Specifically, we provide the error bounds of the expected value of linear combinations of the queue lengths as $\epsilon \dto 0$. After stating the result, we discuss two particular queueing systems in which it can be applied, and we present the proof at the end of the section. Then, in Section \ref{sec:gs.ssc} we prove SSC, which is an essential step in the proof of Theorem \ref{gs.thm:bounds}, and in Section \ref{sec:gs.proof} we prove the theorem.

\begin{theorem} \label{gs.thm:bounds}
Consider a set of generalized switches operating under MaxWeight scheduling policy and the heavy traffic parameter $\epsilon \in (0,1)$, as described in Section \ref{sec:model}. Then, there exists $\epsilon_0\in(0,1)$ such that for any $\epsilon<\epsilon_0$ and any vector $\vw \in \bigcap_{\ell \in P} \cF^\pl$, we have
\begin{align}\label{eq.gen.switch.thm.prelimit}
	&\left|\E{\langle\vqbar^\peps, \vw\rangle}- \dfrac{1}{2\epsilon} \vone^T\left(H\circ \Sigma_a^\peps\right)\vone  \right.\nonumber\\
	&\left.-\dfrac{1}{2\epsilon} \vone^T\left((C^TC)^{-1}\circ \Sigma_B \right)\vone \right|\leq \constant\log\left(\frac{1}{\epsilon}\right),
\end{align}
	where $H\defn C(C^T C)^{-1}C^T$ is the projection matrix into $\cH$ and $\constant$ is a constant independent of $\epsilon$. 
\end{theorem}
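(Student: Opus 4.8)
The plan is to follow the drift method of \cite{atilla,MagSri_SSY16_Switch,Hurtado_gen-switch_temp}, sharpened at two places: state space collapse will be used in a stronger, sub-exponential form, and the residual terms will be controlled with a threshold of order $\log(1/\epsilon)$ rather than bounded by moments. First I would reduce the claim about $\E{\langle\vqbar^\peps,\vw\rangle}$ to one about $\E{\langle\vqbarparh^\peps,\vnu\rangle}$. Since $\vw\in\bigcap_{\ell\in P}\cF^\pl$, we have $\langle\vcl,\vw\rangle=\bl=\langle\vcl,\vnu\rangle$ for every $\ell\in P$, so, because $\cH$ is the column space of $C$ and $H=C(C^TC)^{-1}C^T$, $H\vw=C(C^TC)^{-1}\big(\langle\vcl,\vw\rangle\big)_{\ell\in\tilde{P}}=H\vnu$; hence $\langle\vqbarparh^\peps,\vw\rangle=\langle\vqbar^\peps,H\vnu\rangle=\langle\vqbarparh^\peps,\vnu\rangle$ for every admissible $\vw$, which is also why the left-hand side of the claimed bound does not depend on $\vw$. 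Writing $\langle\vqbar^\peps,\vw\rangle=\langle\vqbarparh^\peps,\vnu\rangle+\langle\vqbarperph^\peps,\vw\rangle$ and using $\cK\subseteq\cH$, so that $\|\vqbarperph^\peps\|\le\|\vqbarperpk^\peps\|$, the SSC of Section~\ref{sec:gs.ssc} makes the second term $O(1)$. To obtain the \emph{logarithmic} (not polynomial) rate I would invoke SSC in the strong form $\E{e^{\theta\|\vqbarperpk^\peps\|}}\le\Phibar$ for constants $\theta>0$ and $\Phibar<\infty$ independent of $\epsilon$; the polynomial moment bounds used in \cite{Hurtado_gen-switch_temp} only yield polynomially growing error.

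Next I would set to zero the steady-state drift of the natural test function $V(\vq)=\|\vqparh\|^2=\vq^{T}H\vq$ (legitimate because the moments involved are finite, by SSC and the a.s.\ bounds on $\vabar^\peps,\vsbar^\peps,\vubar^\peps$), which, with $\Delta^\peps\defn\vabar^\peps-\vsbar^\peps+\vubar^\peps$, gives
\[
0 \;=\; 2\,\E{\langle\vqbarparh^\peps,\,H\Delta^\peps\rangle} \;+\; \E{\langle\Delta^\peps,\,H\Delta^\peps\rangle}.
\]
For the quadratic term, write $\Delta^\peps=(\vabar^\peps-\vlambda^\peps)-(\vsbar^\peps-\E{\vsbar^\peps})+(\vlambda^\peps-\E{\vsbar^\peps})+\vubar^\peps$ and use that arrivals are independent of $(\vqbar^\peps,\Mbar)$; that under MaxWeight the service \emph{saturates}, $\langle\vcl,\vsbar^\peps\rangle=\bml$ for all $\ell\in\tilde{P}$, off a rare ``misalignment'' event; that $\vubar^\peps$ is bounded; and that $\vlambda^\peps-\E{\vsbar^\peps}=O(\epsilon)$. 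This gives $\E{\langle\Delta^\peps,H\Delta^\peps\rangle}=\vone^{T}(H\circ\Sigma_a^\peps)\vone+\vone^{T}\big((C^TC)^{-1}\circ\Sigma_B\big)\vone$ up to an $O(\epsilon)$ remainder --- the first term being $\E{\|(\vabar^\peps-\vlambda^\peps)_{\parallel\cH}\|^2}$ and the second being $\E{\|(\vsbar^\peps-\E{\vsbar^\peps})_{\parallel\cH}\|^2}$ under saturation, the two being uncorrelated.

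For the cross term, $\E{\langle\vqbarparh^\peps,\vabar^\peps\rangle}=(1-\epsilon)\E{\langle\vqbarparh^\peps,\vnu\rangle}$ by independence and $\vlambda^\peps=(1-\epsilon)\vnu$; $\langle\vqbarparh^\peps,\vsbar^\peps\rangle=\langle\vqbar^\peps,H\vsbar^\peps\rangle$ with $\E{H\vsbar^\peps\mid\vqbar^\peps}=H\vnu$ up to the misalignment correction (since $\langle\vcl,\vsbar^\peps\rangle=\bml$ while $\langle\vcl,\vnu\rangle=\bl=\E{\Bbar_\ell}$); and the complementary-slackness relation \eqref{eq: qu}, $\langle(\vqbar^\peps)^+,\vubar^\peps\rangle=0$, together with the fact that the projection onto $\cH$ suppresses the contribution of the queues that stay bounded under SSC, lets one rewrite the unused-service contribution $\E{\langle\vqbarparh^\peps,\vubar^\peps\rangle}$ in terms of quantities supported on $\{\vubar^\peps\ne\vzero\}$ restricted to the heavy directions. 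Collecting terms, the cross term equals $-\epsilon\,\E{\langle\vqbarparh^\peps,\vnu\rangle}+R^\peps$, where $R^\peps$ gathers the MaxWeight misalignment term $\sum_{\ell\in\tilde{P}}\E{\big[(C^TC)^{-1}C^T\vqbar^\peps\big]_\ell(\bml-\langle\vcl,\vsbar^\peps\rangle)}$ and the unused-service terms. Solving the drift identity,
\[
\E{\langle\vqbarparh^\peps,\vnu\rangle}=\frac{1}{2\epsilon}\vone^{T}(H\circ\Sigma_a^\peps)\vone+\frac{1}{2\epsilon}\vone^{T}\big((C^TC)^{-1}\circ\Sigma_B\big)\vone+O(1)+\frac{1}{\epsilon}R^\peps,
\]
so the theorem reduces to showing $|R^\peps|=O(\epsilon\log(1/\epsilon))$.

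This last step is the crux, and it is where the sub-exponential SSC is indispensable. The key observation is that $\bml>\langle\vcl,\vsbar^\peps\rangle$ can hold only when the MaxWeight argmax at $\vqbar^\peps$ differs from the one at $\vqbarpark^\peps$, and --- because each $\cS^\pm$ is finite --- this forces $\|\vqbarperpk^\peps\|$ to exceed a fixed fraction of $\langle\vqbar^\peps,\vnu\rangle$; the unused-service terms are handled analogously by combining \eqref{eq: qu} with the sub-exponential SSC. I would then split each contribution according to whether $\langle\vqbar^\peps,\vnu\rangle$ is below or above a threshold $\kappa\log(1/\epsilon)$: on the ``above'' side, sub-exponential SSC gives $\Prob{\|\vqbarperpk^\peps\|\ge c\,\kappa\log(1/\epsilon)}\le\Phibar\,\epsilon^{\theta c\kappa}$, a power of $\epsilon$ as small as desired by choosing the constant $\kappa$ large, to be weighed against the $O(1/\epsilon^{2})$ second moment of $\langle\vqbar^\peps,\vnu\rangle$; on the ``below'' side, one uses a geometric-type tail bound $\Prob{\langle\vqbar^\peps,\vnu\rangle\le t}=O(\epsilon t)$, itself obtained from a separate exponential Lyapunov-drift argument on $\langle\vq,\vnu\rangle$. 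Making this balance produce a single factor $\log(1/\epsilon)$, rather than the power of $1/\epsilon$ that polynomial moment bounds would force, is the main obstacle. Finally, $\epsilon_0$ is taken small enough that SSC, the MaxWeight saturation geometry near the boundary of $\cC$, and all of the above tail estimates hold.
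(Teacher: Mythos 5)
Your reduction is sound and matches the paper's skeleton: you set the steady-state drift of $\|\vq_{\parallel\cH}\|^2$ to zero, identify the quadratic term with $\vone^T(H\circ\Sigma_a^\peps)\vone+\vone^T\left((C^TC)^{-1}\circ\Sigma_B\right)\vone$ up to $O(\epsilon)$, extract $-\epsilon\,\E{\langle\vqbarparh,\vnu\rangle}$ from the cross term, and correctly reduce the theorem to showing that the misalignment and unused-service remainders are $O\left(\epsilon\log\left(\frac{1}{\epsilon}\right)\right)$. The gap is that this last step --- which you yourself label the main obstacle --- is not carried out, and the route you sketch for it rests on ingredients that are neither proved nor available. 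First, the lower-tail estimate $\Prob{\langle\vqbar^\peps,\vnu\rangle\le t}=O(\epsilon t)$ is a genuinely new anti-concentration claim about the stationary distribution of a multidimensional MaxWeight chain; nothing of this kind appears in the paper or its references, and an exponential Lyapunov-drift argument yields upper tails, not lower ones. Second, your key geometric observation is wrong as stated: if the schedule is misaligned with $\vcl$, the MaxWeight inequality $\langle\vqbar,\vsbar-\vnu^\pm\rangle\ge 0$ only forces $\|\vqbarperpk\|\,\|\vsbar-\vnu^\pm\|\ge\gamma^\pm\xi_\ell$, where $\xi_\ell$ is the weight of $\vqbarpark$ along the \emph{particular} misaligned direction $\vcl$; this can be arbitrarily small while $\langle\vqbar,\vnu\rangle$ is of order $\frac{1}{\epsilon}$ (mass on the other directions of $\cK$), so misalignment does not force $\|\vqbarperpk\|$ to exceed a fixed fraction of $\langle\vqbar,\vnu\rangle$. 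You would additionally need $\E{\langle\vqbar^\peps,\vnu\rangle^2}=O(1/\epsilon^2)$, which also has to be established.

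The paper closes the same gap with a much lighter mechanism, and you already hold all the pieces. The exponential-moment SSC you invoke is equivalent to Proposition \ref{lemma: SSC}: the bound $R_r$ gives $R_r^{1/r}\le\constant_5 (r!)^{1/r}\le \constant_5 e^{\frac{1}{r}-1}r^{1+\frac{1}{2r}}$, i.e.\ moments growing like a constant to the $r$ times $r!$, exactly what $\E{e^{\theta\|\vqbarperpk\|}}<\infty$ would deliver, so no strengthening of SSC is needed. Instead of threshold splitting, apply H\"older's inequality with exponents $(r,p)$, $\frac{1}{r}+\frac{1}{p}=1$, to each rare-event term: the misalignment term is bounded by $R_r^{1/r}\,\Em{\|\vsbar-\vnu^\pm\|^p\ind{\langle\vcl,\vsbar\rangle\ne\bml}}^{1/p}$, and the second factor is $O(\epsilon^{1/p})$ by Lemma \ref{gs.lemma:cl.s.bl}; the unused-service term (after using \eqref{eq: qu} to replace $\vqbarparh^+$ by a perpendicular component) is bounded by $R_r^{1/r}\,\E{\|\vubar\|^p}^{1/p}$ with $\E{\|\vubar\|^p}\le \mathrm{const}\cdot\sum_{\ell\in P}\E{\langle\vcl,\vubar\rangle}=O(\epsilon)$ since $\vubar$ is bounded. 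Choosing $r=\log\left(\frac{1}{\epsilon}\right)$ then gives $\epsilon^{1-\frac{1}{r}}\,r^{1+\frac{1}{2r}}\le 2e\,\epsilon\log\left(\frac{1}{\epsilon}\right)$ for all small $\epsilon$. This also explains your remark about the earlier work: the moment bounds there look ``only polynomial'' because they were used at fixed $r$; the logarithm comes from tracking the $r$-dependence of $R_r^{1/r}$ and optimizing over $r$, not from any new tail or anti-concentration estimate.
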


A similar result establishing the heavy traffic behavior in a generalized switch when CRP condition is not satisfied, was presented in \cite{Hurtado_gen-switch_temp}. 
The main difference 
is that while the result in \cite{Hurtado_gen-switch_temp} shows that the right hand side term in \eqref{eq.gen.switch.thm.prelimit} is $o\left(\frac{1}{\epsilon}\right)$, we obtain a tighter bound here.

Note that the result in Theorem \ref{gs.thm:bounds} presents a logarithmic error bound on the behavior of the MaxWeight algorithm, but does not characterize the optimality of MaxWeight algorithm. Such an optimality result can be obtained by proving a Universal Lower Bound (ULB) satisfied by any policy. Such ULBs were obtained in \cite[Proposition 1]{Hurtado_gen-switch_temp}, but they can be obtained only on particular linear combinations of queue lengths, which need not necessarily be the same as in Theorem \ref{gs.thm:bounds}. For a discussion about the conditions to ensure that both linear combinations coincide, see Remark 2 therein. Even when these conditions are satisfied, there are known examples such as a simple $2\times 2$ input queued switch \cite{2_2_switch_performance}, where the universal lower bound and the heavy-traffic limit of MaxWeight differ by a multiplicative constant (of less than 2). Thus, in general, MaxWeight need not be within an additive error of  $O\left(\log\left(\frac{1}{\epsilon}\right)\right)$ from the optimal policy. 



However, such a logarithmic optimality of MaxWeight can be obtained from Theorem \ref{gs.thm:bounds} when the CRP condition is satisfied and the state space collapses into a one dimensional subspace. In this case,  the heavy traffic limit is known to be the same as the ULB of the scaled expected linear combination of the queue length. In particular, fix $\ell\in[L]$ and assume $\vnu\in Int(\cF^\pl)$ so that the CRP condition is satisfied and SSC occurs into the line generated by $\vcl$. Then for any scheduling algorithm, we have
  \begin{align*}
        & \E{\langle\vqbar^\peps, \vcl\rangle} \geq \textit{ULB} \\ 
        &\defn \dfrac{1}{2\epsilon \bl}\left(\left(\vcl\right)^T \Sigma_a^\peps \vcl + \sigma_{B_\ell}^2 \right) - \dfrac{ (1-\epsilon)b_{\max}}{2},
    \end{align*}
where $b_{\max}\defn \max_{m\in\cM,\ell\in[L]}\{\bml\}$. By Theorem \ref{gs.thm:bounds}, we know that Max-Weight approaches the above lower bound as $\epsilon \dto 0$. Thus, Theorem \ref{gs.thm:bounds} 
establishes that  MaxWeight is within $O\left(\log\left(\frac{1}{\epsilon}\right)\right)$ of the optimal. 
We formally present this result in the next corollary.
\begin{corollary}\label{corollary:gen-switch-CRP}
    For the generalized switch operating under MaxWeight, as described in Theorem \ref{gs.thm:bounds}, fix $\ell\in[L]$ and assume $\vnu\in Int(\cF^\pl)$. 
    Then, for any $\epsilon<\epsilon_0$ we have
    \begin{align*}
        {\textit{ULB}} \leq \E{\langle\vqbar^\peps, \vcl\rangle} \leq {\textit{ULB}}+\tilde{\constant} \log\left(\frac{1}{\epsilon}\right),
    \end{align*}
    where $\tilde{\constant}$ is a constant independent of $\epsilon$ \begin{cdc}
    .
    \end{cdc}
    \begin{techreport}
    , which we compute in \eqref{eq:beta-tilde}. Hence, MaxWeight algorithm is heavy traffic optimal and the error bound to the optimal value is $\log\left(\frac{1}{\epsilon} \right)$.
    \end{techreport}
\end{corollary}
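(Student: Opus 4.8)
\emph{Proof idea.} The lower bound $\textit{ULB}\le\E{\langle\vqbar^\peps,\vcl\rangle}$ is exactly the universal lower bound recalled just before the statement, valid for every scheduling algorithm, so the entire task is the matching upper bound, and the plan is to deduce it from Theorem~\ref{gs.thm:bounds} together with the one-dimensional state space collapse. First I would record what the CRP hypothesis does to the objects in Theorem~\ref{gs.thm:bounds}: since $\vnu$ lies in the relative interior of the facet $\cF^\pl$ it lies on no other facet, hence $\tilde P=P=\{\ell\}$, the matrix $C$ reduces to the single unit column $\vcl$, $C^TC=\|\vcl\|^2=1$, and $H=\vcl(\vcl)^T$. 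Substituting into \eqref{eq.gen.switch.thm.prelimit} collapses the two quadratic forms to scalars, $\vone^T(H\circ\Sigma_a^\peps)\vone=(\vcl)^T\Sigma_a^\peps\vcl$ and $\vone^T((C^TC)^{-1}\circ\Sigma_B)\vone=\sigma_{B_\ell}^2$, so picking any $\vw$ in $\bigcap_{\ell\in P}\cF^\pl=\cF^\pl$ (for instance $\vw=\vnu$), Theorem~\ref{gs.thm:bounds} gives, for all $\epsilon<\epsilon_0$,
\[
\left|\E{\langle\vqbar^\peps,\vw\rangle}-\frac{1}{2\epsilon}\bigl((\vcl)^T\Sigma_a^\peps\vcl+\sigma_{B_\ell}^2\bigr)\right|\le\constant\log\left(\frac{1}{\epsilon}\right).
\]

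Next I would transfer this estimate from $\vw$ to $\vcl$. Because $\cK$ is the ray generated by the unit vector $\vcl$ and $\vqbar^\peps\ge\vzero$, writing the orthogonal decomposition $\vqbar^\peps=\langle\vqbar^\peps,\vcl\rangle\,\vcl+\vqbar^\peps_{\perp\cK}$ and using $\langle\vcl,\vw\rangle=b^\pl$ (valid since $\vw\in\cF^\pl$) gives
\[
\langle\vqbar^\peps,\vw\rangle=b^\pl\langle\vqbar^\peps,\vcl\rangle+\langle\vqbar^\peps_{\perp\cK},\vw\rangle ,
\]
and by Cauchy--Schwarz $|\langle\vqbar^\peps_{\perp\cK},\vw\rangle|\le\|\vw\|\,\|\vqbar^\peps_{\perp\cK}\|$. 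Dividing by $b^\pl>0$, taking expectations, and combining with the displayed estimate yields
\[
\E{\langle\vqbar^\peps,\vcl\rangle}\le\frac{1}{2\epsilon b^\pl}\bigl((\vcl)^T\Sigma_a^\peps\vcl+\sigma_{B_\ell}^2\bigr)+\frac{\constant}{b^\pl}\log\left(\frac{1}{\epsilon}\right)+\frac{\|\vw\|}{b^\pl}\,\E{\|\vqbar^\peps_{\perp\cK}\|}.
\]
The first term on the right is precisely $\textit{ULB}+\tfrac{(1-\epsilon)b_{\max}}{2}$, so only the two additive constants remain to be absorbed into a logarithmic term.

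The single quantitative input still needed is a bound $\E{\|\vqbar^\peps_{\perp\cK}\|}\le D$ with $D$ independent of $\epsilon$; this follows from the SSC moment bounds proved in Section~\ref{sec:gs.ssc} (which underlie Theorem~\ref{gs.thm:bounds} in any case), via $\E{\|\vqbar^\peps_{\perp\cK}\|}\le\E{\|\vqbar^\peps_{\perp\cK}\|^2}^{1/2}$. Since $\log(1/\epsilon)>\log(1/\epsilon_0)>0$ for every $\epsilon<\epsilon_0$, the constants $\tfrac{(1-\epsilon)b_{\max}}{2}\le\tfrac{b_{\max}}{2}$ and $\tfrac{\|\vnu\|D}{b^\pl}$ are each bounded by a fixed multiple of $\log(1/\epsilon)$, so setting
\[
\tilde{\constant}\defn\frac{\constant}{b^\pl}+\frac{1}{\log(1/\epsilon_0)}\left(\frac{b_{\max}}{2}+\frac{\|\vnu\|D}{b^\pl}\right)
\]
completes the proof. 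I expect no genuine difficulty here: the heavy lifting is already contained in Theorem~\ref{gs.thm:bounds} and in the SSC estimates, and the only step requiring care is the first one --- verifying that the CRP hypothesis degenerates $H$ and $C^TC$ to their rank-one forms so that, after the factor $1/b^\pl$, the two variance terms of Theorem~\ref{gs.thm:bounds} line up exactly with the two variance terms in $\textit{ULB}$; the remainder is Cauchy--Schwarz and bookkeeping.
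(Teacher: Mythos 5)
Your proposal is correct and follows the same overall route as the paper's proof: specialize Theorem \ref{gs.thm:bounds} under CRP (where $H=\vcl(\vcl)^T$ and $C^TC=1$, so the two variance terms reduce to $(\vcl)^T\Sigma_a^\peps\vcl$ and $\sigma_{B_\ell}^2$), invoke the universal lower bound of \cite[Proposition 1]{Hurtado_gen-switch_temp}, and absorb the residual constant $b_{\max}(1-\epsilon)/2$ into the logarithmic term. The one place you diverge is the choice of test vector. The paper takes $\vw=\bl\vcl$, which lies in $\cF^\pl$ because $\|\vcl\|=1$, so that $\langle\vqbar^\peps,\vw\rangle=\bl\langle\vqbar^\peps,\vcl\rangle$ holds identically and no transfer step is needed. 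With your choice $\vw=\vnu$ you must control the cross term $\E{\langle\vqbarperpk^\peps,\vw\rangle}$ via Cauchy--Schwarz and the SSC moment bound of Proposition \ref{lemma: SSC}; that works, but it is an avoidable detour: it adds the implicit requirement that $\epsilon_0$ also satisfy the hypothesis of Proposition \ref{lemma: SSC} (i.e.\ $\epsilon<\delta/(2\|\vnu\|)$), and it makes your $\tilde{\constant}$ depend on $\epsilon_0$ through the factor $1/\log(1/\epsilon_0)$, whereas the paper's absorption uses $1-\epsilon\leq\log(1/\epsilon)$ and yields the cleaner, $\epsilon_0$-free constant $\tilde{\constant}=b_{\max}/2+\constant$ of \eqref{eq:beta-tilde}. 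These are differences of economy rather than substance; your argument is sound.
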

\begin{cdc}
We omit the proof of Corollary \ref{corollary:gen-switch-CRP} for brevity, but it is available in the technical report \cite{TechReport}.
\end{cdc}
\begin{techreport}
We present the proof of Corollary \ref{corollary:gen-switch-CRP} in Appendix \ref{app:proof-corollary-CRP}.
\end{techreport}
Such a logarithmic error bound in heavy traffic under CRP, was also obtained in \cite{Mey_08} in the context of a very general SPN model. The $h$-MaxWeight policy is a variant of the MaxWeight algorithm where the weight function $h$  is carefully chosen by solving a fluid control problem. In contrast, here we obtain a logarithmic error bound for the regular MaxWeight algorithm.

An immediate corollary of Theorem \ref{gs.thm:bounds} is to compute the bounds in the case of an input-queued switch. An input-queued switch is a discrete time model with $N^2$ queues that can be represented as $N \times N$ matrix. The $(i,j)\tth$ component of the matrix is the queue of packets at $i\tth$ input port, waiting to be processed at the $j\tth$ output port. Thus, rows are queues at input ports and columns are queues at output ports. All jobs take exactly one time slot to be processed and, in each time slot, at most one input/output pair can be served in each row and column. Then, the input-queued switch can be represented as a generalized switch where $n=N^2$, the channel state is fixed over time and the feasible service rate vectors are analogous to permutation matrices. Thus, the capacity region is given by $2N$ inequalities. Below we present the performance bound for this system under the assumption that all inequalities are tight at $\vnu$.
\begin{corollary}
    For the input queued switch defined above with independent arrivals, heavy traffic parameter $\epsilon \in (0,1)$, and $(\sigma_{a_i}^\peps)^2\defn \Sigma_{i,i}^\peps$, there exists a constant $\constantbar$ and $\epsilon_0\in(0,1)$ such that for all $\epsilon<\epsilon_0$
   	\begin{align*}
		\left|\E{\sum_{i=1}^{N^2} \qbar^\peps_i}- \left(1-\dfrac{1}{2N\epsilon}\right)\sum_{i=1}^{N^2}(\sigma_{a_i}^\peps)^2\right| \leq \constantbar \log\left(\frac{1}{\epsilon}\right)
	\end{align*}
\end{corollary}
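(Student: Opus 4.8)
The plan is to obtain this corollary as a direct specialization of Theorem \ref{gs.thm:bounds}, for which all of the data $\cM$, $\cS^\pm$, $P$, $C$, $H$ and $\Sigma_B$ admit an explicit description in the input-queued switch. First I would set up the dictionary: $n=N^2$ with the flattened index identified with a pair $(i,j)$; the channel state space $\cM=\{m_0\}$ a singleton (the channel is fixed over time), so there is no randomness in the environment; and $\cS^{(m_0)}$ the set of $0$--$1$ matrices with at most one $1$ in each row and each column (the permutation matrices together with their coordinate projections, so that the standing assumptions of Section \ref{sec:model} hold). The capacity region $\cC$ is then the set of doubly substochastic matrices, cut out by the $2N$ normalized inequalities $\langle\vcl,\vx\rangle\le\bl$ where, for a row constraint, $\vcl=\tfrac1{\sqrt N}$ times the indicator of that row and $\bl=\tfrac1{\sqrt N}$, and similarly for columns. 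The hypothesis that all $2N$ inequalities are tight at $\vnu$ says exactly that $\vnu$ is a positive doubly stochastic matrix, hence $P=[2N]$.

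Next I would pin down $\tilde P$, $C$, $\cH$ and the projection matrix $H$. The $2N$ facet normals span a subspace of dimension exactly $2N-1$: the row indicators are independent, the column indicators are independent, and the only relation between the two groups is that $\sum(\text{row indicators})=\sum(\text{column indicators})$ (both proportional to $\vone$), so $\mathrm{span}(\text{rows})\cap\mathrm{span}(\text{cols})$ is the one-dimensional uniform direction. Thus $|\tilde P|=2N-1$, $C^TC$ is invertible, and $\cH=\mathrm{col}(C)$ is the span of all row and column indicators, whose orthogonal complement is the space of $N\times N$ matrices with all row sums and all column sums zero. Consequently $H$ is the classical two-way (ANOVA / double-centering) projection, $(HX)_{ij}=\tfrac1N\sum_k X_{ik}+\tfrac1N\sum_k X_{kj}-\tfrac1{N^2}\sum_{k,l}X_{kl}$; evaluating on the unit matrix $E^{(ij)}$ gives the diagonal entries $H_{(ij),(ij)}=\tfrac1N+\tfrac1N-\tfrac1{N^2}=\tfrac{2N-1}{N^2}$. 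Since the arrivals are independent, $\Sigma_a^\peps$ is diagonal with entries $(\sigma_{a_i}^\peps)^2$, so $\vone^T(H\circ\Sigma_a^\peps)\vone=\sum_i H_{(i),(i)}(\sigma_{a_i}^\peps)^2=\tfrac{2N-1}{N^2}\sum_i(\sigma_{a_i}^\peps)^2$. Moreover, because the channel is fixed, each $B_\ell(1)$ equals the deterministic constant $\bml=\tfrac1{\sqrt N}$, so $\Sigma_B=0$ and the second bracketed term of \eqref{eq.gen.switch.thm.prelimit} vanishes.

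Finally I would choose the test vector $\vw$ with $w_{ij}=\tfrac1N$ for all $i,j$. Since $P=[2N]$, a vector lies in $\bigcap_{\ell\in P}\cF^\pl$ iff it is a nonnegative doubly stochastic matrix, and this $\vw$ qualifies; it also satisfies $\langle\vqbar^\peps,\vw\rangle=\tfrac1N\sum_{i=1}^{N^2}\qbar_i^\peps$. Plugging $\vw$, the computed $H$ and $\Sigma_B=0$ into Theorem \ref{gs.thm:bounds} and multiplying the inequality through by $N$ yields $\bigl|\E{\sum_{i=1}^{N^2}\qbar_i^\peps}-\tfrac1{2\epsilon}\cdot\tfrac{2N-1}{N}\sum_{i=1}^{N^2}(\sigma_{a_i}^\peps)^2\bigr|\le N\constant\log(1/\epsilon)$, and since $\tfrac1{2\epsilon}\cdot\tfrac{2N-1}{N}=\bigl(1-\tfrac1{2N}\bigr)\tfrac1\epsilon=\tfrac1\epsilon-\tfrac1{2N\epsilon}$, this is precisely the claimed bound with $\constantbar=N\constant$ and $\epsilon_0$ inherited from Theorem \ref{gs.thm:bounds}. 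I expect essentially no serious obstacle here: the only mildly delicate step is the linear-algebra observation that the $2N$ facet normals have rank exactly $2N-1$ and that $\cH$ is the ANOVA subspace, since that is what makes the closed-form evaluation of the diagonal of $H$ possible; the rest is substitution into Theorem \ref{gs.thm:bounds}.
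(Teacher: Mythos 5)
Your derivation is correct and is exactly the route the paper has in mind: the paper omits this proof, noting it amounts to simplifying the left-hand side of \eqref{eq.gen.switch.thm.prelimit} as in \cite[Corollary 5]{Hurtado_gen-switch_temp}, and your specialization (singleton channel state so $\Sigma_B=0$, rank $2N-1$ of the $2N$ facet normals, the double-centering projection with diagonal entries $\tfrac{2N-1}{N^2}$, and the uniform doubly stochastic choice of $\vw$ followed by multiplying through by $N$) is precisely that computation. One caveat: your final centering term $\left(1-\tfrac{1}{2N}\right)\tfrac{1}{\epsilon}\sum_{i}(\sigma_{a_i}^\peps)^2$ is \emph{not} literally the term $\left(1-\tfrac{1}{2N\epsilon}\right)\sum_{i}(\sigma_{a_i}^\peps)^2$ displayed in the corollary, so the assertion that your bound ``is precisely the claimed bound'' is inaccurate as written. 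The displayed term appears to be a typo --- as printed it is negative and differs from $\E{\sum_i \qbar_i^\peps}=\Theta(1/\epsilon)$ by order $1/\epsilon$, so no $O(\log(1/\epsilon))$ bound could hold --- whereas your $\tfrac{1}{\epsilon}\left(1-\tfrac{1}{2N}\right)$ agrees with the known heavy-traffic limit of the input-queued switch and with what Theorem \ref{gs.thm:bounds} actually yields; you should say explicitly that you are correcting the displayed constant rather than matching it.
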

The proof involves simplifying the left hand side of \eqref{eq.gen.switch.thm.prelimit} and is omitted as it is similar to \cite[Corollary 5]{Hurtado_gen-switch_temp}.

\subsection{State Space Collapse}\label{sec:gs.ssc}
\begin{cdc}
We start introducing notation. For a vector $\vx\in\bR^n_+$, let $\vx_{\parallel\cK}$ and $\vx_{\parallel\cH}$ be its projection on $\cK$ and $\cH$, respectively. Define $\vx_{\perp\cdot}\defn \vx-\vx_{\parallel\cdot}$ where $\cdot$ can be $\cK$ or $\cH$.
\end{cdc}

\begin{techreport}
We start introducing some notation. For each $\epsilon\in(0,1)$, let $\vqpark^\peps(k)$ and $\vqparh^\peps(k)$ be the projection of $\vq^\peps(k)$ on $\cK$ and $\cH$ respectively, and $\vqperpk^\peps(k)\defn \vq^\peps(k)-\vqpark^\peps(k)$, $\vqperph^\peps(k)\defn \vq^\peps(k)-\vqparh^\peps(k)$. Finally, we denote the steady state vectors by $\vqbarpark^\peps$, $\vqbarperpk^\peps$, $\vqbarparh^\peps$ and $\vqbarperph^\peps$ which are limit in distribution of $\left\{\vqpark^\peps(k):k\geq 1\right\}$, $\left\{\vqperpk^\peps(k):k\geq 1\right\}$, $\left\{\vqparh^\peps(k):k\geq 1\right\}$ and $\left\{\vqperph^\peps(k):k\geq 1\right\}$, respectively. The steady state vectors are well defined as the above Markov Chains are positive recurrent by the definition of projection and the fact that $\{\vq^\peps(k): k\geq 1\}$ is positive recurrent for all $\epsilon\in(0,1)$.
\end{techreport}

The SSC proved in \cite{Hurtado_gen-switch_temp} is that $\|\vqbar_{\perp \cK}\|$ has bounded moments, where the bounds do not depend on $\epsilon$. Here we explicitly compute a bound and, later, we use it to obtain the heavy traffic error bounds. 
\begin{proposition} \label{lemma: SSC}
For the generalized switch model operating under MaxWeight parametrized by $\epsilon\in(0,1)$ described in Section \ref{sec:model}, consider a vector $\vnu\in\textit{Bo}(\cC)$. 
Let $\delta>0$ be such that $\delta\leq b^\pl - \langle \vc^\pl,\vnu\rangle$ for all $\ell\in[L]\setminus P$ if $P\subsetneq [L]$ and $\delta=1$ if $P=[L]$. Let $\alpha\defn \max\{\amax,\smax\}$. If $\epsilon<\frac{\delta}{2\|\vnu\|}$, then for each $r=1,2,\ldots$ we have
\begin{align*}
    \E{\|\vqbar_{\perp \cH} \|^r}\leq{}& \E{\|\vqbar_{\perp \cK} \|^r} \\
    \leq{}& R_r\defn \left(\dfrac{8n\alpha^2}{\delta} \right)^r + (8\sqrt{n}\alpha)^r \left(\dfrac{8\sqrt{n}\alpha + \delta}{\delta} \right)^r r!
\end{align*}
\end{proposition}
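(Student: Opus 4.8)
The plan is to establish a drift-based bound on $\|\vqbar_{\perp\cK}\|$ using a carefully chosen Lyapunov function, following the template of state-space-collapse arguments à la \cite{atilla,Hurtado_gen-switch_temp}, but keeping track of all constants explicitly so that the tail estimate is quantitative. First I would observe that the inclusion $\E{\|\vqbar_{\perp\cH}\|^r}\leq\E{\|\vqbar_{\perp\cK}\|^r}$ is immediate: since $\cH$ is the affine hull (in fact linear span, since $\vzero\in\cK$) of $\cK$, the projection onto $\cH$ is ``closer'' than the projection onto the convex cone $\cK$, i.e.\ $\|\vx_{\perp\cH}\|\leq\|\vx_{\perp\cK}\|$ pointwise, and monotonicity of $x\mapsto x^r$ together with monotonicity of expectation finishes it. So the real content is the bound $\E{\|\vqbar_{\perp\cK}\|^r}\leq R_r$.

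For that, I would use the Lyapunov function $W_{\perp\cK}(\vq)\defn\|\vq_{\perp\cK}\|$ and study its one-step drift in steady state. The standard decomposition is $W_{\perp\cK}\big((\vqbar^\peps)^+\big)^2 - W_{\perp\cK}(\vqbar^\peps)^2 \leq$ (a bounded term of order $\alpha^2$, coming from the squared increments $\|\va-\vs+\vu\|^2\leq 4n\alpha^2$) plus a cross term $2\langle(\vqbar^\peps)_{\perp\cK},\,\vabar-\vsbar+\vubar\rangle$. The key geometric fact — which is exactly where MaxWeight and the parametrization $\vlambda^\peps=(1-\epsilon)\vnu$ enter — is that this cross term provides a \emph{negative} drift of size at least $\delta$ whenever $\|\vq_{\perp\cK}\|$ is bounded away from $0$: one shows $\langle\vq_{\perp\cK},\vabar-\vsbar\rangle\leq -\delta$-ish $\cdot\,\mathbbm{1}\{\|\vq_{\perp\cK}\|\text{ large}\}$ plus a term controlled by $\|\vq_{\perp\cK}\|$ times the per-slot jump $2\sqrt n\alpha$, using that $\vnu$ is at distance $\geq\delta$ from every non-tight facet and that the unused-service term $\vubar$ points ``into'' $\cK$. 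This is the place I expect to spend the most care: pinning down the constant in the negative drift (the $\epsilon<\delta/(2\|\vnu\|)$ hypothesis is precisely what guarantees the drift stays bounded away from zero uniformly in $\epsilon$) and correctly handling the boundary/unused-service interaction, since \eqref{eq: qu} only gives $q_i(k+1)u_i(k)=0$ at the level of coordinates, not directly in the $\perp\cK$ direction.

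Once the drift inequality $\E{W_{\perp\cK}\mid W_{\perp\cK}=y}\big]-y\leq -\eta$ for $y\geq y_0$ (with $\eta$ proportional to $\delta$, $y_0$ proportional to $\alpha^2/\delta$, and bounded jumps $\leq 2\sqrt n\alpha$) is in hand, I would invoke a geometric-tail / moment bound for such processes — essentially Hajek's lemma or the exponential Lyapunov argument used in \cite{atilla,Hurtado_gen-switch_temp}: a nonnegative process with uniformly bounded increments and negative drift outside a bounded set has sub-exponential steady-state tails, hence $\E{\|\vqbar_{\perp\cK}\|^r}\leq (\text{threshold})^r + (\text{jump scale})^r\cdot(\text{drift-decay ratio})^r\, r!$. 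Matching the stated $R_r=\big(\tfrac{8n\alpha^2}{\delta}\big)^r + (8\sqrt n\alpha)^r\big(\tfrac{8\sqrt n\alpha+\delta}{\delta}\big)^r r!$ then just amounts to feeding in $y_0\leq 8n\alpha^2/\delta$, jump scale $8\sqrt n\alpha$, and decay ratio $(8\sqrt n\alpha+\delta)/\delta$ — a routine bookkeeping step where I would be generous with the constant $8$ to absorb the various factors of $2$ and $\sqrt n$ from the increment bounds and the drift computation, rather than optimizing them.
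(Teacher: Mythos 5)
Your proposal follows essentially the same route as the paper: the first inequality via $\cK\subset\cH$ and the pointwise comparison of projections, and the second via a negative-drift condition for $\|\vq_{\perp\cK}\|$ with uniformly bounded increments, fed into a Hajek-type moment bound (the paper uses \cite[Lemma 3]{MagSri_SSY16_Switch}). The only difference is that the paper does not re-derive the drift inequality as you sketch but simply cites it from \cite[Proposition 2]{Hurtado_gen-switch_temp} (increments bounded by $2\sqrt{n}\alpha$, drift at most $-\delta/4$ outside a ball of radius proportional to $n\alpha^2/\delta$), which is exactly the estimate your outline reconstructs.
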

\begin{cdc}
We omit the proof due to lack of space, but we present it in the technical report \cite{TechReport}.
\end{cdc}
\begin{techreport}
We present the proof in Appendix \ref{app:gen-switch-proof-SSC}.
\end{techreport}

\subsection{Proof of Theorem \ref{gs.thm:bounds}.}\label{sec:gs.proof}
Our proof is similar to the proof of \cite[Theorem 1]{Hurtado_gen-switch_temp}, so we omit some steps. Although, for clarity,
we will present a brief explanation wherever necessary. The main difference between our proof and the proof in \cite{Hurtado_gen-switch_temp} is that we compute tighter bounds for all traffic, and obtain logarithmic bounds on the heavy traffic error bounds. Before proving the theorem, we will restate the lemmas from \cite{Hurtado_gen-switch_temp} which are essential, for completeness.
\begin{lemma}\label{gs.lemma:bjl.bl}
	Let $\ell\in P$ and $m\in\cM$. Then, there exists $\vnu^\pm\in \cS^\pm$ such that $\bml=\langle \vcl,\vnu^\pm\rangle$. This implies that, for each $\ell\in P$, $\bl=\E{\Bbar_\ell}=\sum_{m\in\cM}\psi_m \bml$.
\end{lemma}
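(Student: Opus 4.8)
The statement to prove is Lemma~\ref{gs.lemma:bjl.bl}: for $\ell \in P$ and $m \in \cM$, there exists $\vnu^\pm \in \cS^\pm$ with $\bml = \langle \vcl, \vnu^\pm\rangle$, and consequently $\bl = \E{\Bbar_\ell} = \sum_{m\in\cM}\psi_m\bml$.

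\medskip

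\textbf{Proof plan.}
The existence of $\vnu^\pm$ is immediate: $\bml$ is defined in \eqref{gs.eq.cl.weighted.def} as the maximum of the linear functional $\vx \mapsto \langle \vcl, \vx\rangle$ over the \emph{finite} set $\cS^\pm$, so the maximum is attained at some point $\vnu^\pm \in \cS^\pm$, which by definition satisfies $\bml = \langle \vcl, \vnu^\pm\rangle$. The content of the lemma is really the second assertion, which says that the supporting hyperplane $\langle \vcl, \cdot\rangle = \bl$ of the capacity region $\cC$ is ``tight on average'' over channel states precisely when $\ell \in P$. The plan is: first show $\bl \geq \sum_{m} \psi_m \bml$ using that $\cC = \sum_m \psi_m \, ConvexHull(\cS^\pm)$ together with the halfspace description \eqref{gs.eq.cap.reg.pol}; then show the reverse inequality using that $\vnu$ lies on the facet $\cF^\pl$ for each $\ell \in P$.

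\medskip

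For the first inequality, note that the vector $\vx^\star \defn \sum_{m\in\cM}\psi_m \vnu^\pm$ lies in $\cC$, since each $\vnu^\pm \in \cS^\pm \subseteq ConvexHull(\cS^\pm)$ and $\cC$ is the corresponding convex combination of these convex hulls. Hence $\langle \vcl, \vx^\star\rangle \leq \bl$ by \eqref{gs.eq.cap.reg.pol}, and by linearity $\langle \vcl, \vx^\star\rangle = \sum_m \psi_m \langle \vcl, \vnu^\pm\rangle = \sum_m \psi_m \bml$, giving $\sum_m \psi_m \bml \leq \bl$. For the reverse inequality, use $\ell \in P$, i.e. $\vnu \in \cF^\pl$, so $\langle \vcl, \vnu\rangle = \bl$. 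Since $\vnu \in \cC = \sum_m \psi_m\, ConvexHull(\cS^\pm)$, write $\vnu = \sum_m \psi_m \vy^\pm$ with $\vy^\pm \in ConvexHull(\cS^\pm)$. Then
\[
  \bl = \langle \vcl, \vnu\rangle = \sum_{m\in\cM} \psi_m \langle \vcl, \vy^\pm\rangle \leq \sum_{m\in\cM}\psi_m \max_{\vx\in ConvexHull(\cS^\pm)}\langle \vcl,\vx\rangle = \sum_{m\in\cM}\psi_m\, \bml,
\]
where the last equality holds because a linear functional attains its maximum over a polytope at a vertex, so maximizing over $ConvexHull(\cS^\pm)$ is the same as maximizing over $\cS^\pm$, which is $\bml$ by \eqref{gs.eq.cl.weighted.def}. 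Combining the two inequalities yields $\bl = \sum_m \psi_m \bml$, and since $\Prob{\Bbar_\ell = \bml} = \psi_m$ by the definition of $B_\ell$, this equals $\E{\Bbar_\ell}$.

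\medskip

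\textbf{Main obstacle.} There is no serious obstacle here — this is a routine convexity/LP-duality argument. The only point requiring a little care is the interchange between maximizing over $\cS^\pm$ and over its convex hull (fine because the objective is linear and $\cS^\pm$ is finite), and keeping straight that membership $\vnu \in \cF^\pl$ for \emph{all} $\ell \in P$ is exactly what forces the averaged constraint to be tight. One should also confirm the decomposition $\vnu = \sum_m \psi_m \vy^\pm$ is legitimate, which it is by the very definition of the Minkowski-sum capacity region $\cC$.
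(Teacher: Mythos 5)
Your proof is correct and complete. Note that this paper does not actually prove Lemma~\ref{gs.lemma:bjl.bl}; it only restates it from \cite{Hurtado_gen-switch_temp} "for completeness," so there is no in-paper proof to compare against. Your argument — attainment of the maximum over the finite set $\cS^\pm$, the inequality $\sum_{m\in\cM}\psi_m \bml \leq \bl$ via the point $\sum_{m\in\cM}\psi_m\vnu^\pm \in \cC$, and the reverse inequality via the Minkowski-sum decomposition $\vnu=\sum_{m\in\cM}\psi_m \vy^\pm$ of a point on the facet $\cF^\pl$ together with the fact that a linear functional's maximum over $ConvexHull(\cS^\pm)$ equals its maximum over $\cS^\pm$ — is exactly the standard argument one expects the cited reference to use, and it correctly exploits $\ell\in P$ (i.e.\ $\langle\vcl,\vnu\rangle=\bl$) where it is needed. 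The final step identifying $\sum_{m\in\cM}\psi_m\bml$ with $\E{\Bbar_\ell}$ follows directly from the definition of the pmf of $B_\ell(1)$, as you say.
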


\begin{lemma}\label{gs.lemma:cl.s.bl}
	For each $m\in \cM$ and $\ell\in P$ define $\piml\defn \,\Prob{\left.\langle \vcl,\vsbar\rangle =\bml\,\right|\, \Mbar=m }$. 
	Then, $1-\piml$ is $O(\epsilon)$.
\end{lemma}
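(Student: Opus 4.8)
The plan is to run a one–dimensional drift argument in the direction $\vcl$ and to use the hypothesis $\ell\in P$, which forces $\langle\vcl,\vnu\rangle=\bl$; this makes the mean arrival rate in direction $\vcl$ exactly a $(1-\epsilon)$ fraction of the capacity $\bl$ in that direction, so on average the scheduler must be using almost all of the available $\vcl$-weighted service. The only ingredients are the recursion \eqref{eq:evolution}, the definition \eqref{gs.eq.cl.weighted.def} of $\bml$, Lemma \ref{gs.lemma:bjl.bl}, and finiteness of $\cS^\pm$; in particular no state-space-collapse information is needed for this lemma.

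First I would take inner products of \eqref{eq:evolution} with $\vcl$ and pass to steady state. Using that $\langle\vcl,\left(\vqbar^\peps\right)^+\rangle$ and $\langle\vcl,\vqbar^\peps\rangle$ have the same mean, $\E{\vabar^\peps}=(1-\epsilon)\vnu$, and $\langle\vcl,\vnu\rangle=\bl$ (this is where $\ell\in P$ enters),
\[
\E{\langle\vcl,\vsbar^\peps\rangle}=\E{\langle\vcl,\vabar^\peps\rangle}+\E{\langle\vcl,\vubar^\peps\rangle}=(1-\epsilon)\bl+\E{\langle\vcl,\vubar^\peps\rangle}.
\]
On the other hand, conditioned on $\Mbar=m$ the steady-state service $\vsbar^\peps$ lies in $\cS^\pm$, so \eqref{gs.eq.cl.weighted.def} gives $\langle\vcl,\vsbar^\peps\rangle\le\bml$; taking total expectation over $\Mbar$ and applying Lemma \ref{gs.lemma:bjl.bl},
\[
\E{\langle\vcl,\vsbar^\peps\rangle}=\sum_{m\in\cM}\psi_m\,\E{\langle\vcl,\vsbar^\peps\rangle\mid\Mbar=m}\le\sum_{m\in\cM}\psi_m\,\bml=\bl.
\]
Since $\vcl\ge\vzero$ and $\vubar^\peps\ge\vzero$, combining the two displays gives $0\le\bl-\E{\langle\vcl,\vsbar^\peps\rangle}=\epsilon\bl-\E{\langle\vcl,\vubar^\peps\rangle}\le\epsilon\bl$.

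Then I would disaggregate the deficit over channel states: $\bl-\E{\langle\vcl,\vsbar^\peps\rangle}=\sum_{m\in\cM}\psi_m\,\E{\bml-\langle\vcl,\vsbar^\peps\rangle\mid\Mbar=m}$, where each summand is nonnegative by \eqref{gs.eq.cl.weighted.def}; hence for the fixed $m$ in the statement (assuming $\psi_m>0$, else the claim is vacuous) we get $\E{\bml-\langle\vcl,\vsbar^\peps\rangle\mid\Mbar=m}\le\epsilon\bl/\psi_m$. Finally, because $\cS^\pm$ is finite, $\langle\vcl,\vx\rangle$ takes finitely many values over $\vx\in\cS^\pm$: if all of them equal $\bml$ then $\piml=1$ and we are done; otherwise there is a gap $\Delta>0$, independent of $\epsilon$, between $\bml$ and the next-largest such value, so $\bml-\langle\vcl,\vsbar^\peps\rangle\ge\Delta$ on the event $\{\langle\vcl,\vsbar^\peps\rangle<\bml\}$. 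Markov's inequality then yields
\[
1-\piml=\Prob{\langle\vcl,\vsbar^\peps\rangle<\bml\mid\Mbar=m}\le\frac{\epsilon\,\bl}{\psi_m\,\Delta}=O(\epsilon).
\]
The only delicate point is the passage to stationarity in the first display, which requires $\E{|\langle\vcl,\vqbar^\peps\rangle|}<\infty$; I would invoke this from the positive recurrence of the MaxWeight chain and the finite-moment bounds for $\vqbar^\peps$ already established in \cite{Hurtado_gen-switch_temp}, after which the drift identity is legitimate and everything else is elementary.
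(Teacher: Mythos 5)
Your proof is correct, and it follows essentially the argument behind this lemma (which the paper itself only restates from \cite{Hurtado_gen-switch_temp} without proof): set the drift of the linear test function $\langle \vcl,\vq\rangle$ to zero in steady state to get $\E{\langle\vcl,\vsbar\rangle}\geq (1-\epsilon)\bl$, disaggregate the deficit over channel states via Lemma \ref{gs.lemma:bjl.bl}, and use the finite gap in $\{\langle\vcl,\vx\rangle:\vx\in\cS^\pm\}$ (the quantity $\gammam$ that later appears in the constant $\constant_6$ of Claim \ref{gs.claim.T1.partial}) together with Markov's inequality. Your handling of the edge cases ($\psi_m=0$, no gap) and of the finiteness of $\E{\langle\vcl,\vqbar^\peps\rangle}$ needed to justify the stationary drift identity is appropriate, so there is no gap to report.
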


Now, we present the proof of the theorem.
\begin{proof}[Proof of Theorem \ref{gs.thm:bounds}]
We omit the dependence on $\epsilon$ of the variables for ease of exposition. 
\begin{cdc}
We set to zero the drift of $V_{\parallel \cH}(\vq)\defn\left\| \vq_{\parallel \cH}\right\|^2$, which is known to have finite expected value \cite{Hurtado_gen-switch_temp}. 
\end{cdc}
\begin{techreport}
We start by defining the Lyapunov function $V_{\parallel \cH}(\vq)\defn\left\| \vq_{\parallel \cH}\right\|^2$. To set the drift of this Lyapunov function to zero in the steady state, we first verify that $\E{\left\| \vq_{\parallel \cH}\right\|^2}<\infty$. This can be done using \cite[Lemma 2]{Hurtado_gen-switch_temp} and non expansive property of projection onto a convex set, but we omit the proof for brevity. 
\end{techreport}
We have

\begin{align}
0 ={}& \E{\left\|\vqbarparh^+ \right\|^2 - \left\|\vqbarparh \right\|^2} \nonumber \\
\overset{(*)}{=}&\underbrace{\E{\left\|\vabarparh - \vsbarparh\right\|^2}}_{\cT_2} +\underbrace{2\E{\langle \vqbarparh,\, \vabarparh-\vsbarparh \rangle}}_{-\cT_1} \nonumber\\
&-\underbrace{\E{\left\|\vubarparh \right\|^2}}_{\cT_3}+ \underbrace{2\E{\langle\vqbarparh^+,\, \vubarparh\rangle}}_{\cT_4} \label{gs.eq.drift.zero}
\end{align}
where $(*)$ follows by using the recursion \eqref{eq:evolution}, the definition of norm and inner product and then simplifying the terms. Thus, we have $\cT_1=\cT_2-\cT_3+\cT_4$. 
For this proof, we will borrow the bounds on $\cT_2$ and $\cT_3$ directly from \cite{Hurtado_gen-switch_temp} as these are not the bottleneck for the optimal error bounds. We restate them. There exist constants $\constant_1$ and $\constant_2$ such that
\begin{align}
& \left|\cT_2 - \vone^T\left(H\circ \Sigma_a^\peps\right)\vone - \vone^T \left((C^TC)^{-1}\circ \Sigma_B \vone 
\right) \right|
\leq \constant_1\epsilon \label{gs.eq.T2} \\
&\;\; \cT_3\leq \constant_2\epsilon \label{gs.eq.T3}
\end{align}
Now, we focus on the terms $\cT_1$ and $\cT_4$. We start with $\cT_1$.
\begin{align}
\cT_1=& 2\E{\langle\vqbarparh,\vsbarparh-\vabarparh \rangle} \nonumber \\
\stackrel{(*)}{=}& 2\epsilon\E{\langle\vqbarparh,\vnu\rangle} + 2\E{\langle\vqbarparh, \vsbar-\vnu\rangle}, \nonumber 
\end{align}
where $(*)$ follows by first using the orthogonality principle and then substituting $\E{\vabar}=(1-\epsilon)\vnu$ and observing that $\vabar$ is independent of $\vqbarparh$. It suffices to bound the second term. 
\begin{claim}\label{gs.claim.T1.partial}
	Consider the system described in Theorem \ref{gs.thm:bounds}. Then, there exist $\epsilon_0'>0$ and a finite constant $\constant_3$ such that
	\begin{align*}
	\left|\E{\langle \vqbarparh, \vsbar-\vnu\rangle} \right| 
	\leq \constant_3 \epsilon\log\left(\dfrac{1}{\epsilon}\right)\quad  \forall \epsilon<\epsilon_0'.
	\end{align*}
\end{claim}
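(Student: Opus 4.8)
The goal is to show $|\E{\langle \vqbarparh, \vsbar-\vnu\rangle}|$ is $O(\epsilon\log(1/\epsilon))$. The term $\vsbar - \vnu$ is the deviation of the (steady-state) offered service from the target boundary point $\vnu$. The plan is to decompose this inner product over the facet indices $\ell \in P$ by writing $\vnu$ (and the projection structure) in terms of the basis $\{\vcl : \ell \in \tilde P\}$, so that it suffices to control, for each $\ell \in P$, a quantity of the form $\E{\langle \vqbarparh,\vcl\rangle(\bl - \langle \vcl, \vsbar\rangle)}$ together with the analogous terms coming from $\vqbarperph$-pieces being negligible by Proposition \ref{lemma: SSC}.

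First I would reduce to bounding $\E{\langle\vqbarparh,\vcl\rangle(\bl-\langle\vcl,\vsbar\rangle)}$ for each $\ell\in P$. The key observation is a case split on the event $\{\langle\vcl,\vsbar\rangle=\bml\}$ versus its complement. On the complement, Lemma \ref{gs.lemma:cl.s.bl} says $1-\piml = O(\epsilon)$, i.e. this event is rare; combined with the uniform-in-$\epsilon$ moment bounds on $\|\vqbarperph\|$ (and hence, via MaxWeight, control of $\langle\vqbarparh,\vcl\rangle$ on states where service is suboptimal) from Proposition \ref{lemma: SSC}, this contributes $O(\epsilon)$ after a Cauchy–Schwarz step — provided one also controls $\E{\langle\vqbarparh,\vcl\rangle}$, which is itself $O(1/\epsilon)$ at worst. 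That product of $O(1/\epsilon)$ against $O(\epsilon)$ (crudely) leaves $O(1)$, not $O(\epsilon\log(1/\epsilon))$, so the crude bound is not enough: one needs the finer fact that on the ``bad'' event the MaxWeight structure forces $\langle\vqbarparh,\vcl\rangle$ to be comparable to $\langle\vqbar_{\perp\cK},\cdot\rangle$-type quantities which have $\epsilon$-uniform moments, killing the $1/\epsilon$. This is where the $\log(1/\epsilon)$ enters: to get the genuinely tight estimate one truncates at a threshold growing like $\log(1/\epsilon)$, bounding the contribution below the threshold by (threshold)$\times O(\epsilon)$ and the tail above the threshold using the exponential-type moment bound $R_r$ from Proposition \ref{lemma: SSC} (whose $r!$ growth gives sub-exponential tails, so the tail beyond $c\log(1/\epsilon)$ is $O(\epsilon)$).

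On the ``good'' event $\{\langle\vcl,\vsbar\rangle = \bml\}$, I would use Lemma \ref{gs.lemma:bjl.bl}: since $\bl = \E{\Bbar_\ell} = \sum_m \psi_m \bml$, the quantity $\bl - \langle\vcl,\vsbar\rangle$ has conditional-on-$\Mbar$ mean zero up to the rare bad event, so $\E{\langle\vqbarparh,\vcl\rangle(\bl-\langle\vcl,\vsbar\rangle)}$ is, after conditioning on the queue and channel state, a covariance-type term plus the bad-event remainder. Using independence of $\Bbar_\ell$ from $\vqbarparh$ on the good event and the $O(\epsilon)$ probability of the complement (Lemma \ref{gs.lemma:cl.s.bl}), the same truncation-at-$\log(1/\epsilon)$ argument yields the $O(\epsilon\log(1/\epsilon))$ bound. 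Summing the finitely many $\ell\in P$ contributions and collecting the projection constants gives $\constant_3$.

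The main obstacle I anticipate is exactly the mismatch flagged above: naively, $\langle\vqbarparh,\vcl\rangle$ is only $O(1/\epsilon)$ and the relevant rare event has probability $O(\epsilon)$, so the product is $O(1)$ rather than $o(1)$. Extracting the extra $\log(1/\epsilon)$ savings requires combining (i) the tight sub-exponential tail bounds $R_r$ of Proposition \ref{lemma: SSC} for the perpendicular component, (ii) the MaxWeight optimality inequality to transfer ``service is suboptimal for facet $\ell$'' into ``$\vqbar$ is far from the cone $\cK$'', and (iii) a carefully chosen truncation level $\Theta(\log(1/\epsilon))$ balancing the below-threshold and tail contributions. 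Making (ii) precise — that a queue state in which $\langle\vcl,\vsbar\rangle<\bml$ must have $\|\vqbar_{\perp\cK}\|$ bounded below by a constant multiple of the relevant deficit — is the technical heart of the argument and is presumably where the detailed MaxWeight computation from \cite{Hurtado_gen-switch_temp} gets refined.
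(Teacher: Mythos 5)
Your plan follows the same route as the paper's proof: condition on the channel state and split on the event $\{\langle\vcl,\vsbar\rangle=\bml\}$ (Lemmas \ref{gs.lemma:bjl.bl} and \ref{gs.lemma:cl.s.bl}); the ``good'' event contributes nothing (in the paper this is pointwise, because the comparison point per channel state is $\vnu^\pm$ with $\langle\vcl,\vnu^\pm\rangle=\bml$, rather than via your mean-zero-given-$\Mbar$ phrasing, which only holds after averaging over $m$ using independence of $\Mbar$ from $\vqbar$); and the rare ``bad'' event is handled by using MaxWeight to shift the queue dependence onto the perpendicular component, whose moments $R_r$ are bounded uniformly in $\epsilon$ by Proposition \ref{lemma: SSC}, together with the $\vqbarperpk-\vqbarperph$ correction terms. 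The only genuine difference is the final quantitative device: you truncate at a threshold $\Theta\bigl(\log(1/\epsilon)\bigr)$ and invoke the sub-exponential tails implied by the $r!$ growth of $R_r$, whereas the paper applies H\"older's inequality with conjugate exponents $(r,p)$ and then chooses $r=\log(1/\epsilon)$, using $R_r^{1/r}\lesssim r$ (Stirling) and $(1-\piml)^{1/p}\lesssim e\,\epsilon$. The two are equivalent in strength; H\"older with a growing exponent is slightly cleaner (no separate tail estimate), while your truncation makes the origin of the logarithm more transparent.

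One statement in your last paragraph needs correcting, though the fix is easy and consistent with your earlier phrasing. It is not true that $\langle\vcl,\vsbar\rangle<\bml$ forces $\|\vqbar_{\perp\cK}\|$ to be bounded below by a constant multiple of the deficit: at $\vqbar=\vzero$ (or any small queue) a tie-break can select a schedule suboptimal for facet $\ell$ while $\vqbar_{\perp\cK}=\vzero$. What MaxWeight actually gives --- and all the argument needs --- is the sandwich
\begin{align*}
0\;\geq\;\langle\vqbarpark,\vsbar-\vnu^\pm\rangle\;\geq\;-\langle\vqbarperpk,\vsbar-\vnu^\pm\rangle,
\end{align*}
the upper bound because $\vqbarpark\in\cK$ and $\langle\vcl,\vsbar\rangle\leq\bml$ for $\ell\in P$, the lower bound because $\vnu^\pm\in\cS^\pm$ and MaxWeight maximizes $\langle\vqbar,\cdot\rangle$ over $\cS^\pm$. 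Hence it is the \emph{product} of the nonnegative parallel coefficients with the deficits that is controlled by $\|\vqbarperpk\|\,\|\vsbar-\vnu^\pm\|$; on the bad event the deficit is at least $\gamma^\pm>0$, so the parallel contribution is at most a constant times $\|\vqbarperpk\|\,\|\vsbar-\vnu^\pm\|$, which is exactly the quantity your truncation (or the paper's H\"older step) is applied to. With that replacement for the ``technical heart,'' your proof goes through.
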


For $\cT_4$ we have the following result.
\begin{claim} \label{claim: T4}
	Consider the system described in Theorem \ref{gs.thm:bounds}. Then, there exist $\epsilon_0''>0$ and a finite constant $\constant_4$ such that
\begin{align*}
    \cT_4\leq \constant_4 \epsilon\log\left(\frac{1}{\epsilon}\right) \quad \forall \epsilon < \epsilon_0''
\end{align*}
\end{claim}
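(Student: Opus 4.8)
The plan is to bound $\cT_4 = 2\E{\langle\vqbarparh^+,\,\vubarparh\rangle}$ by exploiting the complementarity relation \eqref{eq: qu}, which forces the unused service to be supported on queues that are (essentially) empty. The first step is to decompose $\vubarparh$ using the orthogonality principle: since $\vubarparh$ is the projection of $\vubar$ onto $\cH$, write $\langle\vqbarparh^+,\vubarparh\rangle = \langle\vqbar^+,\vubarparh\rangle = \langle\vqbar^+,\vubar\rangle - \langle\vqbar^+,\vubar_{\perp\cH}\rangle$. The term $\langle\vqbar^+,\vubar\rangle$ is amenable to \eqref{eq: qu}: since $q_i^+ u_i = 0$ coordinatewise, we get $\langle\vqbar^+,\vubar\rangle = 0$, so $\cT_4 = -2\E{\langle\vqbar^+,\vubar_{\perp\cH}\rangle} = -2\E{\langle\vqbarperph^+,\vubar_{\perp\cH}\rangle}$, a quantity controlled by the state-space-collapse bound once we also control $\vubar$.

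The second step is to bound $\|\vubar_{\perp\cH}\|$. By non-expansiveness of projection, $\|\vubar_{\perp\cH}\| \le \|\vubar\|$, and since $0 \le u_i(1) \le s_i(1) \le \smax$ we have $\|\vubar\| \le \sqrt{n}\,\smax$ deterministically; so this factor is bounded by a constant. Hence $|\cT_4| \le 2\sqrt{n}\,\smax\,\E{\|\vqbarperph^+\|}$. Now $\vqbarperph^+ = \vqbar^+ - (\vqbar^+)_{\parallel\cH}$, and $(\vqbar^+)_{\parallel\cH} = (\vqbar + \vabar - \vsbar + \vubar)_{\parallel\cH}$; since $\vsbar \in \cC$ has its ``parallel part'' changing by $O(1)$ and arrivals/unused are bounded, $\|\vqbarperph^+\| \le \|\vqbarperph\| + c_0$ for a constant $c_0$ depending only on $\amax,\smax,n$. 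By Proposition~\ref{lemma: SSC} (with $r=1$), $\E{\|\vqbarperph\|} \le R_1$, a constant independent of $\epsilon$. This already gives $|\cT_4| \le \beta_4'$ for a constant $\beta_4'$, which is \emph{stronger} than the claimed $\beta_4\,\epsilon\log(1/\epsilon)$ in absolute terms — but the claim needs the $\epsilon$ factor, so a cruder $O(1)$ bound is not enough.

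The third and crucial step is therefore to extract the extra factor of $\epsilon$. The key observation is that $\langle\vqbarperph^+,\vubar_{\perp\cH}\rangle$ is nonzero only when $\vubar \neq \vzero$, i.e.\ only on the event that some queue is (nearly) empty under MaxWeight, and Lemma~\ref{gs.lemma:cl.s.bl} tells us that $\langle\vcl,\vsbar\rangle = \bml$ with probability $1-O(\epsilon)$ for $\ell \in P$ — the complement of this event, where genuine unused service can accumulate on the relevant facets, has probability $O(\epsilon)$. So the plan is to write $\cT_4 = -2\E{\langle\vqbarperph^+,\vubar_{\perp\cH}\rangle\,\ind{E}}$ where $E$ is the (rare) event that $\langle\vcl,\vsbar\rangle < \bml$ for some $\ell\in P$ (equivalently, the event on which $\vubarparh$ can be large relative to its typical size), apply Cauchy–Schwarz to split into $\sqrt{\E{\|\vqbarperph^+\|^2\|\vubar_{\perp\cH}\|^2}}\cdot\sqrt{\Prob{E}}$, bound the first factor by $R_2^{1/2}\cdot\sqrt{n}\,\smax$ (again using Proposition~\ref{lemma: SSC} with $r=2$ plus the $O(1)$ shift), and the second by $\sqrt{O(\epsilon)}$. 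This yields $|\cT_4| = O(\sqrt\epsilon)$, which is in fact better than $O(\epsilon\log(1/\epsilon))$.

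The main obstacle I expect is making the event-$E$ argument watertight: one must argue carefully that outside $E$ the inner product $\langle\vqbarperph^+,\vubar_{\perp\cH}\rangle$ truly vanishes (or is $O(\epsilon)$), which requires relating the ``parallel to $\cH$'' unused service to the facet-slackness events $\{\langle\vcl,\vsbar\rangle<\bml\}$. This is where the geometry of $\cH$ as the column space of $C = [\vcl]_{\ell\in\tilde P}$ enters: one needs that $u_i > 0$ only for coordinates $i$ that matter for the parallel component, and that such events are subsumed in $E$. If a clean dichotomy is not available, the fallback is to bound $\E{\|\vubarparh\|^r}$ directly by an $O(\epsilon)$-type quantity — mirroring the proof of Lemma~\ref{gs.lemma:cl.s.bl} — and then combine with the $R_r$ moment bounds via Hölder. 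Either route should deliver a bound of the form $\beta_4\,\epsilon^{1/2}$ (hence certainly $\le \beta_4\,\epsilon\log(1/\epsilon)$ is \emph{not} implied; rather one gets the reverse, so the statement as written with $\epsilon\log(1/\epsilon)$ is a weakened, safe conclusion that follows a fortiori once $\epsilon < \epsilon_0''$ is small enough that $\epsilon^{1/2} \le \epsilon\log(1/\epsilon)$ fails — so more likely the intended bound genuinely is $O(\epsilon\log(1/\epsilon))$ coming from a less wasteful argument tracking $\log$ factors through the tail of $\|\vqbarperph\|$ via its sub-exponential tail implied by $R_r \le c^r r!$). Accordingly, I would actually carry out the Cauchy–Schwarz step against the sub-exponential tail bound $\Prob{\|\vqbarperph\| > t} \le 2e^{-t/c}$ (extracted from $R_r \le c^r r!$), truncating at level $t \asymp \log(1/\epsilon)$, which is the natural source of the $\log(1/\epsilon)$ in the statement.
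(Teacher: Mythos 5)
Your opening reduction coincides with the paper's: orthogonality plus the complementarity relation \eqref{eq: qu} turns $\cT_4$ into $-2\E{\langle\vqbarperph^+,\vubar\rangle}$ (the paper does this restricted to the coordinates $i$ with $c_i^\pl>0$). The route you then take has a genuine gap. The dichotomy your ``event $E$'' argument needs is false: unused service is created by (nearly) empty queues, and a queue can be empty while the MaxWeight schedule still attains $\langle\vcl,\vsbar\rangle=\bml$ (an empty queue carries zero weight, so serving it costs nothing in weight). Lemma~\ref{gs.lemma:cl.s.bl} controls the probability that the schedule is $\vcl$-suboptimal, not the event $\{\vubar\neq\vzero\}$, so $\langle\vqbarperph^+,\vubar_{\perp\cH}\rangle$ does not vanish off $E$. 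Moreover, even if it did, Cauchy--Schwarz against $\Prob{E}=O(\epsilon)$ gives only $O(\sqrt{\epsilon})$, and $\sqrt{\epsilon}$ is \emph{larger} than $\epsilon\log(1/\epsilon)$ as $\epsilon\dto 0$, so --- contrary to your assertion --- an $O(\sqrt{\epsilon})$ bound would not prove the claim; your closing parenthetical half-notices this but leaves the argument unresolved.

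The one-sentence fallback you mention is essentially the paper's proof, but both of its quantitative ingredients are missing from your sketch. First, one needs an $L^p$ bound on the unused service that is $O(\epsilon)$ uniformly in $p$: since $\ubar_i\le\smax$, $\E{\ubar_i^p}\le\smax^{p-1}\E{\ubar_i}$, and for coordinates with $c_i^\pl>0$ (this is exactly why the sets $\cL_+^\pl$ and the tilde vectors are introduced; coordinates outside the support of every $\vcl$ are not controlled) one has $\sum_i\E{\ubar_i}\le\E{\langle\vcl,\vubar\rangle}/\widetilde{c}_{\min}\le\constant_7\epsilon/\widetilde{c}_{\min}$, where the $O(\epsilon)$ is a steady-state drift fact from \cite{Hurtado_gen-switch_temp}, not Lemma~\ref{gs.lemma:cl.s.bl}. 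This yields $\E{\|\vubartildel\|^p}^{1/p}\le(\mathrm{const}\cdot\epsilon)^{1/p}$. Second, H\"older with $r=\log(1/\epsilon)$ and conjugate $p$: Proposition~\ref{lemma: SSC} plus Stirling gives $\E{\|\vqbarperph^+\|^r}^{1/r}=O(r)=O(\log(1/\epsilon))$, while $\epsilon^{1/p}=\epsilon^{1-1/r}\le e\,\epsilon$ for small $\epsilon$, so the product is $O(\epsilon\log(1/\epsilon))$. Your final remark about truncating the sub-exponential tail at $t\asymp\log(1/\epsilon)$ points toward this mechanism, but as written the proposal neither establishes the $O(\epsilon)$ control of the unused-service factor nor carries out the exponent optimization, so it does not prove the claim.
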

The proof of both the claims are presented at the end of the section. Now, using \eqref{gs.eq.T2}, \eqref{gs.eq.T3}, and Claims \ref{gs.claim.T1.partial}, \ref{claim: T4}, we obtain that for any $\epsilon<\epsilon_0\defn \min\{\epsilon_0',\epsilon_0''\}$
\begin{align*}
\begin{aligned}
&\left|\E{\langle\vqbar^\peps, \vw\rangle}- \dfrac{1}{2\epsilon} \vone^T\left(H\circ \Sigma_a^\peps\right)\vone \right. \nonumber \\
&\left. - \frac{1}{2\epsilon}\vone^T\left((C^TC)^{-1}\circ \Sigma_B\right)\vone \right|\leq \constant \log\left(\frac{1}{\epsilon}\right),
\end{aligned}
\end{align*}
where $\beta$ depends on $\beta_1,\beta_2,\beta_3,\beta_4$.

To complete the proof for all $\vw\in\cap_{\ell\in P}\cF^\pl$ we follow the steps as in \cite[Proof of Theorem 1]{Hurtado_gen-switch_temp}, so we omit the details. 
\end{proof}

Now we prove the claims. The main idea is to use Hölder's inequality and Proposition \ref{lemma: SSC} with the right choice of the parameter $r$.
\begin{proof}[Proof of Claim \ref{gs.claim.T1.partial}]
Conditioning on the channel state, we get
\begin{align*}
&\E{\langle \vqbarparh, \vsbar-\vnu\rangle} 
\\
&\stackrel{(*)}{=} \sum_{m\in\cM}\psi_m \Em{ \langle \vqbarparh, \vsbar-\vnu^\pm\rangle \ind{\langle\vcl,\vsbar\rangle\neq \bml}},
\end{align*}
where $\vnu^\pm$ is defined as in Lemma \ref{gs.lemma:bjl.bl} and $(*)$ follows similarly as in \cite[Proof of Claim 1]{Hurtado_transform_method_temp}. 
It remains to show that $\Em{ \langle \vqbarparh, \vsbar-\vnu^\pm\rangle \ind{\langle\vcl,\vsbar\rangle\neq \bml}}$ is $O\left(\epsilon\log\left(\frac{1}{\epsilon}\right)\right)$.

Observe that $\vqbar=\vqbarparh+\vqbarperph = \vqbarpark+\vqbarperpk$, thus 

\begin{align}
& \Em{ \langle \vqbarparh, \vsbar-\vnu^\pm\rangle \ind{\langle\vcl,\vsbar\rangle\neq \bml}} \nonumber \\
& \begin{aligned}\label{gs.eq.claimT1.partial}
    &= \Em{\langle \vqbarpark,\vsbar-\vnu^\pm \rangle\ind{\langle\vcl,\vsbar\rangle\neq \bml}} \\
    &+ \Em{\langle \vqbarperpk-\vqbarperph ,\vsbar-\vnu^\pm\rangle\ind{\langle\vcl,\vsbar\rangle\neq \bml}}
\end{aligned}
\end{align}
Now, we show that each term in \eqref{gs.eq.claimT1.partial} is $O\left(\epsilon\log\left(\frac{1}{\epsilon}\right)\right)$. For the first term, we have 
$\Em{\langle \vqbarpark,\vsbar-\vnu^\pm \rangle\ind{\langle\vcl,\vsbar\rangle\neq \bml}} \leq 0$ 
by the definition of projection on the cone $\cK$ and by definition of $\vnu^\pm$ and $\bml$ in Lemma \ref{gs.lemma:bjl.bl}. Now, we have

\begin{align}
0\geq& \Em{\langle \vqbarpark,\vsbar-\vnu^\pm \rangle\ind{\langle\vcl,\vsbar\rangle\neq \bml}} \nonumber \\
\stackrel{(a)}{\geq}& - \E{\left\|\vqbarperpk \right\|^r}^{\frac{1}{r}} \Em{\left\| \vsbar-\vnu^\pm \right\|^p \ind{\langle\vcl,\vsbar\rangle\neq \bml}}^{\frac{1}{p}} \nonumber \\
\stackrel{(b)}{\geq}& -R_r^{\frac{1}{r}}\, \Em{\left\| \vsbar-\vnu^\pm \right\|^p \ind{\langle\vcl,\vsbar\rangle\neq \bml}}^{\frac{1}{p}} \nonumber
\end{align}
where $(a)$ holds by first using $\vqbarpark=\vqbar-\vqbarperpk$ and then bounding the term which includes $\vqbar$ using the definition of MaxWeight  in \eqref{gs.eq.MW}, and the fact that $\vnu^\pm\in\cS^\pm$. Finally, we bound the inner product between $\vqbar_{\perp \cK}$ and $\vsbar-\vnu^\pm$ using Hölder's inequality for some $p,r>1$ such that $\frac{1}{p}+\frac{1}{r}=1$. Inequality $(b)$ holds by SSC in Proposition \ref{lemma: SSC}. Now, by the definition of $R_r$, we have
\begin{align*}
    R_r^{\frac{1}{r}}&=\left(\left(\frac{8n\alpha^2}{\delta}\right)^r+\left(8\sqrt{n}\alpha\right)^r\left(\frac{8\sqrt{n}\alpha+\delta}{\delta}\right)^r r!\right)^{\frac{1}{r}} \\
    &\overset{(a)}{\leq} \constant_5 (r!)^{\frac{1}{r}} \overset{(b)}{\leq} \constant_5 e^{\frac{1}{r}-1} r^{1+\frac{1}{2r}} 
\end{align*}
where $\constant_5\defn \frac{8n \alpha^2}{\delta}+8\sqrt{n}\alpha\left(\frac{8\sqrt{n}\alpha+\delta}{\delta}\right)$; $(a)$ follows from Proposition \ref{lemma: SSC}; and $(b)$ follows from Stirling's approximation. Now, we will bound $\Em{\left\| \vsbar-\vnu^\pm \right\|^p \ind{\langle\vcl,\vsbar\rangle\neq \bml}}^{\frac{1}{p}}$ as follows.
\begin{align}
0\leq& \Em{\left\| \vsbar-\vnu^\pm \right\|^p \ind{\langle\vcl,\vsbar\rangle\neq \bml}}^{\frac{1}{p}} \nonumber \\
\stackrel{(a)}{=}& \Em{\left. \left\| \vsbar-\vnu^\pm \right\|^p \right| \langle\vcl,\vsbar\rangle\neq \bml}^{\frac{1}{p}}\left(1-\piml \right)^{\frac{1}{p}} \nonumber\\
\stackrel{(b)}{\leq}& n\left(\smax^p + V_{\max}^p\right)\left(1-\piml \right)^{\frac{1}{p}} \stackrel{(c)}{=} \constant_6\epsilon^{\frac{1}{p}} \label{eq.gs.claimT1.last},
\end{align}
where $(a)$ holds by definition of $\piml$ in Lemma \ref{gs.lemma:cl.s.bl}; $(b)$ holds with $V_{\max} = \max_{m\in \cM, i\in[n]}\nu^\pm_i$
; and  $(c)$ holds by Lemma \ref{gs.lemma:cl.s.bl} for $\constant_6\defn n\left(\smax^p + V_{\max}^p \right)\frac{\bml}{\gamma^\pm}$, where $\gamma^\pm\defn \min\left\{\bml-\langle\vcl,\vx\rangle: \langle \vcl,\vx\rangle<\bml ,\,\ell\in P,\, \vx\in\cS^\pm \right\}$.

Now, pick $r\defn\log\left(\frac{1}{\epsilon}\right)$ to get
\begin{align*}
    0\geq{}& \Em{\langle \vqbarpark,\vsbar-\vnu^\pm \rangle\ind{\langle\vcl,\vsbar\rangle\neq \bml}} \nonumber \\
    \geq{}& -\constant_5\constant_6e^{\frac{1}{r}-1}r^{1+\frac{1}{2r}}\epsilon^{\frac{1}{p}} \nonumber \\
    ={}& -\constant_5\constant_6e^{\frac{1}{\log\left(\frac{1}{\epsilon}\right)}-1}\log\left(\frac{1}{\epsilon}\right)^{1+\frac{1}{2\log\left(\frac{1}{\epsilon}\right)}}\epsilon^{-\frac{1}{\log\left(\frac{1}{\epsilon}\right)}}\epsilon \nonumber \\
    \overset{(*)}{\geq}{}& -2\constant_5\constant_6 \epsilon\log\left(\frac{1}{\epsilon}\right) \quad \forall \epsilon<\epsilon_0'
\end{align*}
where $\epsilon_0'$ is defined below, and $(*)$ follows as 
\begin{techreport}
\begin{align*}
    &\lim_{\epsilon \dto 0} e^{\frac{1}{\log\left(\frac{1}{\epsilon}\right)}-1}\log\left(\frac{1}{\epsilon}\right)^{\frac{1}{2\log\left(\frac{1}{\epsilon}\right)}}\epsilon^{-\frac{1}{\log\left(\frac{1}{\epsilon}\right)}}\nonumber \\
    &=\lim_{\epsilon \dto 0} e^{\frac{1}{\log\left(\frac{1}{\epsilon}\right)}-1} \lim_{\epsilon \dto 0} \log\left(\frac{1}{\epsilon}\right)^{\frac{1}{2\log\left(\frac{1}{\epsilon}\right)}}\lim_{\epsilon \dto 0} \epsilon^{-\frac{1}{\log\left(\frac{1}{\epsilon}\right)}} \nonumber \\
    &= \frac{1}{e} \times 1 \times e=1.
\end{align*}
\end{techreport}

\begin{cdc}
\begin{align*}
&\lim_{\epsilon \dto 0} e^{\frac{1}{\log\left(\frac{1}{\epsilon}\right)}-1}\log\left(\frac{1}{\epsilon}\right)^{\frac{1}{2\log\left(\frac{1}{\epsilon}\right)}}\epsilon^{-\frac{1}{\log\left(\frac{1}{\epsilon}\right)}} =1.
\end{align*}
\end{cdc}
By definition of limit, there exists $\epsilon_0'>0$ such that for all $\epsilon<\epsilon_0'$ we have
\begin{align*}
    e^{\frac{1}{\log\left(\frac{1}{\epsilon}\right)}-1}\log\left(\frac{1}{\epsilon}\right)^{\frac{1}{2\log\left(\frac{1}{\epsilon}\right)}}\epsilon^{-\frac{1}{\log\left(\frac{1}{\epsilon}\right)}} \leq 2.
\end{align*}
The proof that the second term \eqref{gs.eq.claimT1.partial} is $O\left(\epsilon\log\left(\frac{1}{\epsilon}\right)\right)$ follows similarly by linearity of dot product, Hölder's inequality with $r=\log\left(\frac{1}{\epsilon}\right)$ and \eqref{eq.gs.claimT1.last}. We omit the details for brevity.
\end{proof}
\begin{proof}[Proof of Claim \ref{claim: T4}]
We start with the following notation. For each $\ell\in P$, let $\cL_+^\pl\defn \left\{i\in[n]:\; c_i^\pl>0 \right\}$ 
\begin{techreport}
and define
\begin{align*}
\vcltilde=\left\{c_i^\pl\right\}_{i\in\cL_+^\pl} ,\, \vqbartildel=\left\{\qbar_i\right\}_{i\in\cL_+^\pl} \;\text{and}\; \vubartildel=\left\{\ubar_i\right\}_{i\in\cL_+^\pl}.
\end{align*}
\end{techreport}
\begin{cdc}
and for a vector $\vx\in\bR^n$ let $\tilde{\vx}\defn \left\{x_i\right\}_{i\in\cL_+^\pl}$. 
\end{cdc}
Then,
\begin{align*}
0\leq \left|\dfrac{\cT_4}{2} \right|={}& \left|\E{\langle\vqbarparh^+,\,\vubarparh\rangle} \right|
\stackrel{(a)}{=} \left|\E{- \langle \left(\vqbarperphtilde\right)^+,\,\vubartildel\rangle } \right| \\
\overset{(b)}{\leq}{}& \E{\left\|\left(\vqbarperphtilde\right)^+ \right\|^r}^{\frac{1}{r}}\E{\left\|\vubartildel\right\|^p}^{\frac{1}{p}}
\end{align*}
where $(a)$ follows using the definition of projection on the subspace to substitute $\vqbarparh^+=\sum_{\ell\in P}\langle\vcl,\vqbar^+\rangle\vcl$, then the key property \eqref{eq: qu}, and that $\left(\vqbartildel\right)^+=\left(\vqbarparhtilde\right)^+ +\left(\vqbarperphtilde\right)^+$. Then, $(b)$ holds by Hölder's inequality with $p,r>1$ such that $\frac{1}{r}+\frac{1}{p}=1$. Observe
\begin{align*}
\E{\left\|\left(\vqbarperphtilde\right)^+ \right\|^r}^{\frac{1}{r}}\leq \E{\left\|\vqbarperph^+\right\|^r}^{\frac{1}{r}}\leq T_r^{\frac{1}{r}} \leq \constant_1 e^{\frac{1}{r}-1} r^{1+\frac{1}{2r}}  
\end{align*}
and
\begin{align*}
0\leq{}& \E{\left\|\vubartildel\right\|^p}
\stackrel{(a)}{\leq} \sum_{\ell\in P}\sum_{i\in \cL_+^\pl} \dfrac{\ctildel_i}{\ctildel_i} \E{\widetilde{u}_i^p} \\
\stackrel{(b)}{\leq}{}& \dfrac{\smax^{p-1}}{\widetilde{c}_{\min}} \sum_{\ell\in P} \E{\langle\vcltilde,\,\vubartildel\rangle}
\stackrel{(c)}{\leq} \beta_7\epsilon
\end{align*}
where $(a)$ follows as the terms in the summation are all non-negative; $(b)$ holds by defining  $\ds\widetilde{c}_{\min}=\min_{\ell\in P, i\in[n]}\{\ctildel_i\}$ and by definition of dot product; and $(c)$ follows from \cite{Hurtado_gen-switch_temp} for a finite constant $\beta_7$. 
Now, pick $r \defn \log\left(\frac{1}{\epsilon}\right)$ to get
\begin{techreport}
\begin{align*}
    0 \leq \left|\dfrac{\cT_4}{2} \right| \leq{}& \constant_5\constant_7\dfrac{\smax^{1-\frac{1}{p}}}{\widetilde{c}_{\min}^{\frac{1}{p}}}|P|^{\frac{1}{p}} e^{\frac{1}{r}-1} r^{1+\frac{1}{2r}}  \epsilon^{\frac{1}{p}} \\
    ={}& \constant_5\constant_7\smax^{\frac{1}{\log\left(\frac{1}{\epsilon}\right)}}\left(\dfrac{|P|}{\widetilde{c}_{\min}}\right)^{1-\frac{1}{\log\left(\frac{1}{\epsilon}\right)}} e^{\frac{1}{\log\left(\frac{1}{\epsilon}\right)}-1}\times \nonumber\\
    &\log\left(\frac{1}{\epsilon}\right)^{1+\frac{1}{2\log\left(\frac{1}{\epsilon}\right)}}  \epsilon^{-\frac{1}{\log\left(\frac{1}{\epsilon}\right)}}\epsilon \nonumber \\
    \overset{(*)}{\leq}{}&2\constant_5\constant_7\frac{|P|}{\widetilde{c}_{\min}}\epsilon \log\left(\frac{1}{\epsilon}\right) \quad \forall \epsilon<\epsilon_0''
\end{align*}
where $(*)$ follows as 
\begin{align*}
    \lim_{\epsilon \dto 0} &\smax^{\frac{1}{\log\left(\frac{1}{\epsilon}\right)}}\left(\dfrac{|P|}{\widetilde{c}_{\min}}\right)^{1-\frac{1}{\log\left(\frac{1}{\epsilon}\right)}} e^{\frac{1}{\log\left(\frac{1}{\epsilon}\right)}-1}\log\left(\frac{1}{\epsilon}\right)^{\frac{1}{2\log\left(\frac{1}{\epsilon}\right)}}  \epsilon^{-\frac{1}{\log\left(\frac{1}{\epsilon}\right)}} \nonumber \\
    &=1 \times \dfrac{|P|}{\widetilde{c}_{\min}} \times \frac{1}{e} \times 1 \times e=\dfrac{|P|}{\widetilde{c}_{\min}}.
\end{align*}
Thus, there exists $\epsilon_0''>0$ such that for all $\epsilon<\epsilon_0''$ we have 
\begin{align*}
    \smax^{\frac{1}{\log\left(\frac{1}{\epsilon}\right)}}\left(\dfrac{|P|}{e\widetilde{c}_{\min}}\right)^{1-\frac{1}{\log\left(\frac{1}{\epsilon}\right)}} \log\left(\frac{1}{\epsilon}\right)^{\frac{1}{2\log\left(\frac{1}{\epsilon}\right)}}  \epsilon^{-\frac{1}{\log\left(\frac{1}{\epsilon}\right)}}\leq \dfrac{2|P|}{\widetilde{c}_{\min}}
\end{align*}
\end{techreport}
\begin{cdc}
\begin{align*}
    0 \leq \left|\dfrac{\cT_4}{2} \right| \leq{}& \constant_5\constant_7\dfrac{\smax^{1-\frac{1}{p}}}{\widetilde{c}_{\min}^{\frac{1}{p}}}|P|^{\frac{1}{p}} e^{\frac{1}{r}-1} r^{1+\frac{1}{2r}}  \epsilon^{\frac{1}{p}} \\
    \overset{(*)}{\leq}{}&2\constant_5\constant_7\frac{|P|}{\widetilde{c}_{\min}}\epsilon \log\left(\frac{1}{\epsilon}\right) \quad \forall \epsilon<\epsilon_0'',
\end{align*}
where $(*)$ follows similarly to the end of the proof of Claim \ref{gs.claim.T1.partial}. Details are presented in the technical report \cite{TechReport}.
\end{cdc}
\end{proof}
The key idea in obtaining a logarithmic error bound is in picking the right exponent $r$ in Hölder's inequality while bounding terms $\cT_1$ and $\cT_4$. We do this by minimizing the upper bound over $r$ (for a fixed $\epsilon$), which gives $r = \log\left(\frac{1}{\epsilon} \right)$. 

\section{Logarithmic Error Bounds in Load Balancing Systems}

While the generalized switch models many different SPNs where the control is on the service side, there are many systems of interest where the control is on the arrivals, such as the load balancing systems. 
The same technique that we introduced in the proof of Theorem \ref{gs.thm:bounds} can be used in used in such systems. In this section we present a result for a load balancing system operating under Join the Shortest Queue (JSQ) as an illustrative example. Similar results can be obtained for other routing algorithms such as power-of-$d$ choices. We first define the model.

\subsection{Load Balancing Model}\label{sec:jsq-model}

Consider an SPN with $n$ queues, each of them with a separate server. Arrivals occur in a single stream, and there is a dispatcher which routes them according to JSQ (i.e. to the server with the smallest number of jobs in line). After routing, jobs cannot commute lines. We model the system in discrete time, and we track the number of jobs in each queue. Then, the service policy is irrelevant.

Let $\{a(k): k\in\mathbb{Z}_+\}$ be the arrival process to the system, which is a sequence of i.i.d. random variables, and let $\va(k)$ be the vector of arrivals to the queues after routing at time $k$. Then, $a(k)=\sum_{i=1}^n a_i(k)$ by definition. Potential service is a sequence of i.i.d. random vectors, that we denote by $\{\vs(k):k\in\mathbb{Z}_+\}$, and it is independent of the arrival process. We assume there exist finite constants $\amax$ and $\smax$ such that $a(1)\leq \amax$ and $s_i(1)\leq \smax$ for all $i\in[n]$ with probability 1. The difference between potential and actual service is the unused service, and we denote $\vu(k)$ the unused service vector in time slot $k$. The dynamics of the queues occur according to \eqref{eq:evolution}, and \eqref{eq: qu} is satisfied for all $i\in[n]$.

Let $\vmu\defn \E{\vs(1)}$, $\sigma_{s_i}^2\defn \Var{s_i(1)}$ for each $i\in[n]$, and let $\mu_\Sigma \defn \sum_{i=1}^n \mu_i$. The capacity region of the load balancing model is $\cC=\left\{\lambda\in\bR_+: \lambda\leq \mu_\Sigma \right\}$ \cite{atilla}. To model heavy traffic we parametrize the arrival process by $\epsilon\in\left(0,\mu_\Sigma \right)$, letting $\lambda^\peps\defn \E{a^\peps(1)}=\mu_\Sigma-\epsilon$ and $\left(\sigma_a^\peps\right)^2\defn \Var{a^\peps(1)}$.

It is known that SSC occurs into the line where all queue lengths are equal in the load balancing system operating under JSQ. Specifically, \cite{atilla} proved that, denoting $\vqbar^\peps_{\parallel} \defn \left(\frac{1}{n}\sum_{i=1}^n \qbar^\peps_i\right)\vone$ and $\vqbar^\peps_\perp\defn \vqbar^\peps-\vqbar^\peps_{\parallel}$, we have $\E{\left\|\vqbar^\peps_\perp \right\|^r}$ is bounded for all $r\geq 1$.

\subsection{Logarithmic Error Bounds}\label{sec:jsq-rate}

The goal of this section is to prove the following result.

\begin{theorem}\label{thm:jsq}
    Consider a set of load balancing systems operating under JSQ and the heavy traffic parameter $\epsilon\in(0,\mu_\Sigma)$, as described above. Then, there exist a constant $\constant_{JSQ}$ and $\epsilon_0\in(0,\mu_\Sigma)$ such that for all $\epsilon<\epsilon_0$ 
    \begin{align*}
        \left|\E{\sum_{i=1}^n \qbar^\peps_i } -\dfrac{1}{2\epsilon}\left( \left(\sigma_a^\peps\right)^2 + \sum_{i=1}^n\sigma_{s_i}^2 \right) \right|\leq \constant_{JSQ}\log\left(\frac{1}{\epsilon} \right).
    \end{align*}
\end{theorem}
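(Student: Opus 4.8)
The plan is to mimic the proof of Theorem~\ref{gs.thm:bounds}, since the load balancing system under JSQ is essentially a generalized switch with one-dimensional SSC onto the line spanned by $\vone/\sqrt{n}$, but with the roles of arrivals and services swapped (control is on the arrival side). First I would define the Lyapunov function $V_{\parallel}(\vq)\defn\|\vq_{\parallel}\|^2$ where $\vq_{\parallel}=\left(\frac{1}{n}\sum_i q_i\right)\vone$, verify that $\E{\|\vqbar^\peps_{\parallel}\|^2}<\infty$ (using the bounded-moments SSC result from \cite{atilla} quoted at the end of Section~\ref{sec:jsq-model}), and set its steady-state drift to zero: $0=\E{\|(\vqbar_{\parallel})^+\|^2-\|\vqbar_{\parallel}\|^2}$. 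Expanding via the recursion~\eqref{eq:evolution} projected onto $\vone$ gives, analogously to~\eqref{gs.eq.drift.zero}, $\cT_1=\cT_2-\cT_3+\cT_4$ where $\cT_1=2\E{\langle\vqbar_{\parallel},\,\vsbar_{\parallel}-\vabar_{\parallel}\rangle}$, $\cT_2=\E{\|\vabar_{\parallel}-\vsbar_{\parallel}\|^2}$, $\cT_3=\E{\|\vubar_{\parallel}\|^2}$, and $\cT_4=2\E{\langle(\vqbar_{\parallel})^+,\,\vubar_{\parallel}\rangle}$. Since projection onto $\vone$ is just averaging, $\cT_1=\frac{2}{n}\E{(\sum_i\qbar_i)(\sum_i(\sbar_i-\abar_i))}$; using independence of $\vabar$ and $\vsbar$ from $\vqbar_{\parallel}$ in steady state and $\E{\sum_i\sbar_i-\sum_i\abar_i}=\mu_\Sigma-\lambda^\peps=\epsilon$, this becomes $\frac{2\epsilon}{n}\E{\sum_i\qbar_i}+\frac{2}{n}\E{(\sum_i\qbar_i)\langle\vone,\vsbar-\vmu\rangle}$, reducing the theorem to showing the cross term is $O(\epsilon\log(1/\epsilon))$, plus computing $\cT_2$ up to $O(\epsilon)$ and bounding $\cT_3,\cT_4$.

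For $\cT_2$, expand $\frac{1}{n}\E{\left(\sum_i(\abar_i-\sbar_i)\right)^2}$; the mean of $\sum_i(\abar_i-\sbar_i)$ is $-\epsilon$, so $\cT_2=\frac{1}{n}\left(\Var{\sum_i(\abar_i-\sbar_i)}+\epsilon^2\right)=\frac{1}{n}\left((\sigma_a^\peps)^2+\sum_i\sigma_{s_i}^2\right)+O(\epsilon)$ using independence of arrivals and services and the fact that in steady state $\vabar$, $\vsbar$ have the prescribed marginals; I would cite \cite{atilla} or \cite{Hurtado_gen-switch_temp} for the $O(\epsilon)$ control of the discrepancy between $\Var{\sbar_i}$ and $\sigma_{s_i}^2$ (the steady-state service can differ from potential service only when a queue is empty, an $O(\epsilon)$-probability event). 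For $\cT_3\le\constant\epsilon$, the standard argument is that $u_i(k)>0$ forces $q_i(k+1)=0$, and using $\langle\vone,\vubar\rangle$ relates to the throughput gap $\epsilon$, so $\E{\|\vubar\|^2}\le\smax\E{\langle\vone,\vubar\rangle}=\smax\epsilon$ (again borrowable from \cite{atilla}). The genuinely new estimates are the bounds on the $\cT_1$ cross term and $\cT_4$, which is where the logarithmic-versus-polynomial improvement comes from.

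For these I would replicate the Hölder-with-optimized-exponent trick of Claims~\ref{gs.claim.T1.partial} and~\ref{claim: T4}. For $\cT_4$: using~\eqref{eq: qu} componentwise, $\langle(\vqbar_{\parallel})^+,\vubar_{\parallel}\rangle=\frac{1}{n}\langle\vone,(\vqbar)^+\rangle\langle\vone,\vubar\rangle$, and writing $(\vqbar)^+=(\vqbar_{\parallel})^++(\vqbar_{\perp})^+$ together with $\sum_i u_i\,q_i^+=0$ lets me replace $(\vqbar_{\parallel})^+$ by $-(\vqbar_{\perp})^+$ up to a sign, giving $|\cT_4|/2=|\E{\langle(\vqbar_{\perp})^+,\vubar\rangle}|\le\E{\|(\vqbar_{\perp})^+\|^r}^{1/r}\E{\|\vubar\|^p}^{1/p}$; the first factor is bounded by a constant times $r^{1+1/(2r)}e^{1/r-1}$ via the bounded-$r$-th-moment SSC of $\vqbar_{\perp}$ (with a Stirling-type dependence on $r$ — I would need to confirm the $r$-th moment bound from \cite{atilla} grows no faster than $r!$ so the $r$-th root is $O(r)$, or cite/re-derive a suitable such bound), and the second factor is $(\smax^{p-1}\E{\langle\vone,\vubar\rangle})^{1/p}=O(\epsilon^{1/p})$. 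Choosing $r=\log(1/\epsilon)$, $p=r/(r-1)$ makes $\epsilon^{1/p}=\epsilon^{1-1/\log(1/\epsilon)}=\epsilon\cdot e$, and the product is $O(\epsilon\log(1/\epsilon))$, exactly as in the generalized-switch proof. For the $\cT_1$ cross term $\frac{2}{n}\E{(\sum_i\qbar_i)\langle\vone,\vsbar-\vmu\rangle}$: write $\sum_i\qbar_i=\langle\vone,\vqbar_{\parallel}\rangle$ and note $\langle\vone,\vqbar_{\parallel}\rangle\langle\vone,\vsbar-\vmu\rangle=n\langle\vqbar_{\parallel},\vsbar-\vmu\rangle$; the term involving $\vqbar_{\parallel}$ directly I would split as $\vqbar_{\parallel}=\vqbar-\vqbar_{\perp}$ and handle the $\vqbar$ part via a JSQ-specific structural identity — but since service is \emph{not} controlled in JSQ there is no MaxWeight optimality to invoke here; instead the natural move is to observe $\langle\vone,\vsbar-\vmu\rangle$ is a mean-zero bounded random variable \emph{independent of $\vqbar$} (potential service is independent of everything), so $\E{\langle\vone,\vqbar_{\parallel}\rangle\langle\vone,\vsbar-\vmu\rangle}$ is simply $\E{\langle\vone,\vqbar_{\parallel}\rangle}\cdot 0=0$ up to the steady-state correction — hence this cross term is actually $0$ exactly, and only $\cT_3$ and $\cT_4$ contribute the $O(\epsilon\log(1/\epsilon))$ error.

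The main obstacle I anticipate is getting the \emph{quantitative} $r$-dependence of the moment bound $\E{\|\vqbar^\peps_{\perp}\|^r}^{1/r}$ right: \cite{atilla} only asserts boundedness for each fixed $r$, whereas the $r=\log(1/\epsilon)$ trick needs the bound to grow no faster than polynomially (or at worst factorially) in $r$, uniformly in $\epsilon$ — so I would either re-derive a Proposition~\ref{lemma: SSC}-style explicit bound $R_r$ for the JSQ setting (via the same geometric-drift argument: the projection $\|\vqbar_{\perp}\|$ has negative drift bounded away from zero whenever it is large, independent of $\epsilon$, because JSQ forces near-balance), yielding $R_r^{1/r}=O(r)$ after Stirling, and then the rest of the argument goes through verbatim. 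A secondary (routine) obstacle is carefully justifying the steady-state drift equation and the finiteness $\E{\|\vqbar_{\parallel}\|^2}<\infty$, which follows from positive recurrence plus the non-expansiveness of projection, exactly as in the generalized-switch case. Collecting $\cT_1=\cT_2-\cT_3+\cT_4$ then gives $\frac{2\epsilon}{n}\E{\sum_i\qbar_i}=\frac{1}{n}\left((\sigma_a^\peps)^2+\sum_i\sigma_{s_i}^2\right)+O(\epsilon)+O(\epsilon\log(1/\epsilon))$, and dividing by $2\epsilon/n$ yields the claim with $\epsilon_0$ chosen small enough for all the $\epsilon<\epsilon_0$ restrictions (from the SSC proposition and the limit estimates) to hold simultaneously.
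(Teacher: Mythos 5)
Your proposal is correct and follows essentially the same route the paper intends: set the steady-state drift of $\|\vqbar_{\parallel}\|^2$ to zero, identify the leading term $\frac{2\epsilon}{n}\E{\sum_i\qbar_i}$ and the variance term, and control the unused-service cross term via H\"older with the exponent $r=\log\left(\frac{1}{\epsilon}\right)$ together with an explicit $r$-th moment SSC bound, which is exactly Proposition~\ref{prop:jsq-ssc} (so the "main obstacle" you flag is already supplied by the paper). The only loose phrasings — asserting independence of the routed vector $\vabar$ from $\vqbar_{\parallel}$ (only the total $\abar$ is independent, which suffices since projecting onto $\vone$ leaves only totals) and worrying about a discrepancy between $\Var{\sbar_i}$ and $\sigma_{s_i}^2$ (none exists, since $\vsbar$ is the potential service) — are harmless.
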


Similarly to the generalized switch, an essential step in the proof of Theorem \ref{thm:jsq} is to find an explicit upper bound for the moments of $\left\|\vqbar^\peps_\perp \right\|$. We present them in the next Proposition.

\begin{proposition}\label{prop:jsq-ssc}
    For the load balancing system operating under JSQ parametrized by $\epsilon\in(0,\mu_\Sigma)$ described in Section \ref{sec:jsq-model}, let $\mu_{\min}=\min_{i\in[n]}\mu_i$, $\delta\in(0,\mu_{\min})$ and $\alpha_{JSQ}\defn\max\{\amax,\smax\}$. Then, for any choice of $\epsilon\in(0,(\mu_{\min}-\delta)n)$, and all $r=1,2,\ldots$ we have
    \begin{align*}
        \E{\left\|\vqbar_\perp^\peps \right\|^r}\leq& R^{(JSQ)}_r\\
        \defn&{} \left(\dfrac{6n \alpha_{JSQ}^2}{\delta} \right)^r + \left(8\alpha_{JSQ}\sqrt{n
        } \right)^r \left(\dfrac{4\alpha_{JSQ} + \delta}{\delta} \right) r! 
    \end{align*}
\end{proposition}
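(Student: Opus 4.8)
The plan is to mirror the argument used for Proposition \ref{lemma: SSC} in the generalized switch, now specialized to the one-dimensional SSC direction $\vone/\sqrt{n}$ that arises under JSQ. The key object will be the Lyapunov function $V_\perp(\vq) \defn \|\vq_\perp\|$, where $\vq_\perp = \vq - \vq_\parallel$ and $\vq_\parallel = \left(\frac{1}{n}\sum_i q_i\right)\vone$. First I would establish a negative drift condition of the form $\E{V_\perp(\vq(k+1)) - V_\perp(\vq(k)) \mid \vq(k) = \vq} \leq -\frac{\delta}{\text{(const)}}$ whenever $\|\vq_\perp\| $ is large enough, together with a uniform bound on $|V_\perp(\vq(k+1)) - V_\perp(\vq(k))|$ (bounded increments, since arrivals and services are bounded by $\alpha_{JSQ}$, so the increment is at most $2\alpha_{JSQ}\sqrt{n}$ after projection). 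The negative drift comes from the defining property of JSQ: routing the arriving batch to the shortest queue(s) pushes the queue-length vector toward the diagonal, and the service vector $\vmu$ has strictly positive components, so any queue that is ahead of the others gets pulled back; the slack $\delta < \mu_{\min}$ quantifies how much room there is before the drift argument breaks, and the condition $\epsilon < (\mu_{\min}-\delta)n$ ensures the arrival rate is small enough that this pull-back is not overwhelmed.

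Once the drift-plus-bounded-increments conditions are in hand, I would invoke the standard geometric-tail / moment bound for Lyapunov functions with negative drift outside a bounded set (the same lemma used implicitly in \cite{atilla} and in the proof of Proposition \ref{lemma: SSC}): if a nonnegative process has drift $\leq -\gamma$ outside $\{V_\perp \leq B_0\}$ and increments bounded by $\nu_{\max}$, then in steady state $\E{V_\perp^r} \leq (\text{polynomial in } B_0, \gamma, \nu_{\max})^r + (\nu_{\max})^r \cdot (\text{const})^r \cdot r!$, which is exactly the shape of $R_r^{(JSQ)}$. Matching constants: the $\left(\frac{6n\alpha_{JSQ}^2}{\delta}\right)^r$ term should come from the ``base level'' $B_0$ at which the drift turns negative (this is where the factor $\delta$ in the denominator enters, analogous to $\frac{8n\alpha^2}{\delta}$ in Proposition \ref{lemma: SSC}), and the $\left(8\alpha_{JSQ}\sqrt{n}\right)^r\left(\frac{4\alpha_{JSQ}+\delta}{\delta}\right)r!$ term is the geometric-moment contribution, with $8\alpha_{JSQ}\sqrt{n}$ playing the role of the increment scale and $\frac{4\alpha_{JSQ}+\delta}{\delta}$ the geometric ratio $\frac{\nu_{\max}+\gamma}{\gamma}$-type quantity.

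The main obstacle, and the step deserving the most care, is verifying the negative drift of $\|\vq_\perp\|$ under JSQ with explicit, $\epsilon$-independent constants. Unlike MaxWeight on the service side, here the contraction toward the diagonal is driven by the routing rule, and one must carefully account for the interaction of (i) the shortest-queue routing of the whole batch $a(k)$, (ii) the deterministic-in-expectation pull of $\vmu$, and (iii) the unused-service terms $u_i(k)$, which can themselves push the vector around when some queues empty. The cleanest route is probably to bound $\|\vq_\perp(k+1)\|^2 - \|\vq_\perp(k)\|^2$ directly, use $\langle \vq_\perp, \vu \rangle \geq 0$-type sign facts together with the fact that JSQ makes $\langle \vq_\perp(k), \va(k)\rangle \leq 0$ in expectation on the event that the shortest queue is strictly shortest (and an $O(\alpha_{JSQ})$ error term otherwise), and then pass from the square to $\|\vq_\perp\|$ itself via the elementary inequality $\sqrt{x} - \sqrt{y} \leq \frac{x-y}{2\sqrt{y}}$ on the region where $\|\vq_\perp\|$ is bounded below. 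I would then choose the threshold $B_0 = \Theta(n\alpha_{JSQ}^2/\delta)$ so that the $O(\alpha_{JSQ}^2)$ variance/error terms are dominated by the $\delta$-scale drift, and read off the constants to land on exactly $R_r^{(JSQ)}$. The remaining details — establishing $\E{V_\perp}<\infty$ so the drift can be set in steady state, and the bookkeeping in the moment lemma — are routine and parallel to \cite{atilla} and the generalized-switch case, so I would state them briefly and refer back.
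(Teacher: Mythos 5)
Your plan is essentially the paper's proof: the paper simply quotes from \cite{atilla} the two facts you propose to re-derive --- the bounded-increment property $\left|\,\left\|\vq_\perp(k+1)\right\|-\left\|\vq_\perp(k)\right\|\,\right|\le 2\sqrt{n}\,\alpha_{JSQ}$ and a conditional drift of at most $-\delta/2$ once $\left\|\vq_\perp\right\|\ge 4n\alpha_{JSQ}^2/\delta$ --- and feeds them into \cite[Lemma 3]{MagSri_SSY16_Switch}, which is exactly the drift-plus-bounded-increments-then-moment-bound structure you describe, with the constants arising just as you predict. One small correction to your reading of the hypothesis: $\epsilon<(\mu_{\min}-\delta)n$ is an upper bound on $\epsilon$, hence a \emph{lower} bound on the arrival rate $\lambda^\peps=\mu_\Sigma-\epsilon$; since the pull toward the diagonal is produced by JSQ's routing of the arrivals, the condition guarantees the traffic is heavy enough for that pull to dominate the perpendicular component of $\vmu$, not that the arrival rate is ``small enough.''
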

\begin{cdc}
We omit the proof due to lack of space, but we present it in the technical report \cite{TechReport}.
\end{cdc}
\begin{techreport}
\begin{proof}[Proof of Proposition \ref{prop:jsq-ssc}]
    In \cite{atilla} it was proved that for all $k\geq 1$ we have
    \begin{align*}
        \left| \left\|\vq_{\perp\cK}(k+1) \right\| -\left\|\vq_{\perp\cK}(k) \right\|\right| \ind{\vq(k)=\vq}\leq 2\sqrt{n} \alpha_{JSQ}
    \end{align*}
    with probability 1, and that for all $\vq$ such that $\left\|\vq_{\perp\cK}\right\|\geq \frac{4n\alpha_{JSQ}^2}{\delta}$ we have
    \begin{align*}
        \E{\left. \left\|\vq_{\perp\cK}(k+1) \right\| -\left\|\vq_{\perp\cK}(k) \right\| \right| \vq(k)=\vq}\leq -\dfrac{\delta}{2}.
    \end{align*}
    Using these results in \cite[Lemma 3]{MagSri_SSY16_Switch} we obtain the result.    
\end{proof}
\end{techreport}

The proof of Theorem \ref{thm:jsq} follows from the computation of the upper bound in \cite{atilla}, similarly to the proof of Theorem \ref{gs.thm:bounds} so we omit it for brevity.


\section{Conclusions}
In this paper, we study the performance of generalized switch operating under MaxWeight both when the CRP condition is satisfied and when it is not. We show that MaxWeight is within $O\left(\log\left(\frac{1}{\epsilon}\right)\right)$ from its heavy traffic performance. When the CRP condition is satisfied, we show that it is within  $O\left(\log\left(\frac{1}{\epsilon}\right)\right)$ from the optimal policy. 

We also analyze the load balancing system operating under JSQ and prove that the rate of convergence of JSQ to the optimal heavy traffic performance under heavy traffic is $O\left(\log\left(\frac{1}{\epsilon}\right)\right)$. Similar results can be obtained for other routing algorithms.


\bibliographystyle{unsrt}
\bibliography{biblio-ok}


\begin{techreport}
\appendix
\subsection{Proof of Corollary \ref{corollary:gen-switch-CRP}}\label{app:proof-corollary-CRP}

\begin{proof}
    In this case we have $\cK=\left\{\xi \vcl: \xi\geq 0 \right\}$. In Theorem \ref{gs.thm:bounds} we can take $\vw=\bl\vcl$ and we have $H=\vcl (\vcl)^T$ because we assumed $\|\vcl\|=1$. Then, we obtain
    \begin{align*}
        & \E{\langle\vqbar^\peps, \vcl\rangle} \\
        &\leq \dfrac{1}{2\epsilon\bl} \left(\left(\vcl\right)^T \Sigma_a^\peps \vcl + \sigma_{B_\ell}^2 \right) + \constant \log\left(\dfrac{1}{\epsilon} \right),
    \end{align*}
    where $\sigma_{B_\ell}^2\defn (\Sigma_B)_{\ell,\ell}$ and $\beta$ is a constant that does not depend on $\epsilon$. Additionally, from \cite[Proposition 1]{Hurtado_gen-switch_temp} we obtain that, under any scheduling algorithm,
    \begin{align*}
        & \E{\langle\vqbar^\peps, \vcl\rangle} \\ 
        &\geq \dfrac{1}{2\epsilon\bl}\left(\left(\vcl\right)^T \Sigma_a^\peps \vcl + \sigma_{B_\ell}^2 \right) - \dfrac{b_{\max}(1-\epsilon)}{2} \\
        &= ULB,
    \end{align*}
    where $b_{\max}\defn \max_{m\in\cM,\ell\in[L]}\{\bml\}$. Then, putting both results together we obtain
    \begin{align*}
         & \E{\langle\vqbar^\peps, \vcl\rangle} \\
        &\leq ULB + \left(\dfrac{b_{\max}(1-\epsilon)}{2} + \constant \log\left(\dfrac{1}{\epsilon} \right) \right) \\
        & \stackrel{(*)}{\leq} ULB + \left(\dfrac{b_{\max}}{2}+\constant \right)\log\left(\dfrac{1}{\epsilon} \right),
    \end{align*}
    where $(*)$ holds because $1-x\leq \log\left(\frac{1}{x}\right)$ for all $x>0$. Therefore, defining
    \begin{align}\label{eq:beta-tilde}
        \tilde{\constant}\defn \dfrac{b_{\max}}{2}+\beta
    \end{align}
\end{proof}

\subsection{Proof of Proposition \ref{lemma: SSC}}\label{app:gen-switch-proof-SSC}

\begin{proof}[Proof of Proposition \ref{lemma: SSC}]
    The first inequality holds because $\cK\subset \cH$ and by definition of $\vqbar_{\perp\cK}$ and $\vqbar_{\perp \cH}$. Now we prove the second inequality. In \cite[Proposition 2]{Hurtado_gen-switch_temp}, it was proved that for all $k\geq 1$ we have
    \begin{align*}
        \left| \left\|\vq_{\perp\cK}(k+1) \right\| -\left\|\vq_{\perp\cK}(k) \right\|\right| \ind{\vq(k)=\vq}\leq 2\sqrt{n} \alpha
    \end{align*}
    with probability 1, and that for all $\vq$ such that $\left\|\vq_{\perp\cK}\right\|\geq \frac{4n\alpha}{\delta}$ we have
    \begin{align*}
        \E{\left. \left\|\vq_{\perp\cK}(k+1) \right\| -\left\|\vq_{\perp\cK}(k) \right\| \right| \vq(k)=\vq}\leq -\dfrac{\delta}{4}.
    \end{align*}
    Using these results in \cite[Lemma 3]{MagSri_SSY16_Switch} we obtain the result.
\end{proof}
\end{techreport}

\end{document}